\numberwithin{equation}{section}
\def\cb{{\mathcal B}}
\def\ch{{\mathcal H}}
\def\cp{{\mathcal P}}
\def\cs{{\mathcal S}}
\def\ga{{\mathfrak A}}
\def\bc{{\mathbb C}}
\def\bn{{\mathbb N}}
\def\bq{{\mathbb Q}}
\def\br{{\mathbb R}}
\def\bt{{\mathbb T}}
\def\bz{{\mathbb Z}}
\def\a{\alpha}
\def\b{\beta}
\def\th{\theta} 
\def\om{\omega}
\newtheorem{thm}{Theorem}[section]
\newtheorem{lem}[thm]{Lemma}
\newtheorem{cor}[thm]{Corollary}
\newtheorem{prop}[thm]{Proposition}
\newtheorem{rem}[thm]{Remark}
\newtheorem{defin}[thm]{Definition}
\theoremstyle{definition}
\newtheorem{examp}{Example}[section]
\DeclareMathAlphabet{\mathpzc}{OT1}{pzc}{m}{it}
\begin{document}

\title[Non-commutative skew-product extension dynamical systems]
{Non-commutative skew-product extension dynamical systems}

\author[V. Crismale]{Vitonofrio Crismale}
\address{Vitonofrio Crismale\\
Dipartimento di Matematica \\
Universit\`{a} degli Studi di Bari Aldo Moro \\
Via Edoardo Orabona 4, Bari 70125, Italy}\email{{\tt
vitonofrio.crismale@uniba.it}}

\author[S. Del Vecchio]{Simone Del Vecchio}
\address{Simone Del Vecchio\\
Dipartimento di Matematica \\
Universit\`{a} degli Studi di Bari Aldo Moro \\
Via Edoardo Orabona 4, Bari 70125, Italy} \email{{\tt
simone.delvecchio@uniba.it}}

\author[M.E.  Griseta]{Maria Elena Griseta}
\address{Maria Elena Griseta\\
Dipartimento di Matematica \\
Universit\`{a} degli Studi di Bari Aldo Moro \\
Via Edoardo Orabona 4, Bari 70125, Italy} \email{{\tt
mariaelena.griseta@uniba.it}}

\author[S. Rossi]{Stefano Rossi}
\address{Stefano Rossi\\
Dipartimento di Matematica \\
Universit\`{a} degli Studi di Bari Aldo Moro \\
Via Edoardo Orabona 4, Bari 70125, Italy} \email{{\tt
stefano.rossi@uniba.it}}

\keywords{Operator Algebras, Noncommutative Harmonic Analysis, Noncommutative Torus, Skew-Product, Ergodic Dynamical Systems, Unique Ergodicity}
\subjclass[2000]{37A55, 43A99, 46L30, 46L55, 46L65, 46L87}

\begin{abstract}

Starting from a uniquely ergodic action of a locally compact group $G$ on a compact space $X_0$, we consider  
non-commutative skew-product extensions of the dynamics, on
the crossed product $C(X_0)\rtimes_\a\bz$, through a $1$-cocycle of $G$ in $\bt$, with $\a$ commuting with the given dynamics. We first prove that any such two skew-product extensions are conjugate if and only if
the corresponding cocycles are cohomologous. We then study unique ergodicity and unique ergodicity w.r.t. the fixed-point subalgebra by characterizing both in terms of the cocycle assigning the dynamics. The set of all invariant
states is also determined:  it is affinely homeomorphic
with $\cp(\bt)$, the Borel probability measures on the one-dimensional torus $\bt$, as long as the system is not uniquely ergodic. Finally, we show that unique ergodicity w.r.t. the fixed-point
subalgebra of a skew-product extension amounts to the uniqueness of an invariant conditional expectation onto the
fixed-point subalgebra

\end{abstract}

\maketitle

\section{Introduction}

First introduced in the 1950s by Anzai  \cite{A} and  thoroughly analyzed later in the 1960s by Furstenberg  \cite{Fu},  Anzai skew-products are a class of topological dynamical systems which is broad enough to produce a wide range of several different ergodic properties depending on the parameters assigning the system.\\
The starting data to define such systems is a uniquely ergodic system $(X_0, \theta)$, where $X_0$ is a compact Hausdorff space and $\theta$ a homeomorphism of $X_0$ with only one invariant measure $\mu_0$ (which is assumed fully supported).
If now $f$ is any continuous function from $X_0$ to $\bt$, the one-dimensional torus,  we can consider   the homeomorphism
\begin{equation*}
\Phi_f(x, z):= (\theta(x), f(x)z),\,\, (x,z)\in X_0\times\bt\,
\end{equation*}
acting on the product $X_0\times\bt$. The Anzai skew-product associated with $f$ is by definition the dynamical system given by the pair $(X_0\times\bt, \Phi_f)$. The product measure $\mu_0\times m$, with $m$ being the normalized Lebesgue measure on $\bt$, is invariant for $\Phi_f$ irrespective of what $f$ is.  More interestingly, the ergodicity $\mu_0\times m$ 
is the same as requiring that  $\mu_0\times m$ is  the only $\Phi_f$-invariant measure. 
Furthermore, the ergodicity of the product measure can be  recast more concretely in terms of the following equations in the unknown function $g$ in $L^\infty(X_0, \mu_0)$
\begin{equation*}
g(\theta(x)) f^n(x)=g(x)\,\,\, \mu_0-\textrm{almost everywhere}\,\,
\end{equation*}
known as the cohomological equations. Precisely, the product measure is ergodic ({\it i.e.} extreme in the convex set of all invariant measures) if and only if for every $n\neq 0$ the cohomological equations only have the trivial solution $g=0$. It is a deep, highly non-trivial result that the non-existence of non-trivial continuous solutions is equivalent to the minimality  ({\it i.e.} there are no proper closed invariant subsets) of the system instead. 
This yields an interesting, much cited example of a minimal system that nevertheless fails to be uniquely ergodic, see \cite[Corollary 12.8]{K} or \cite[Theorem 7.1]{M}, or \cite{R}.
In recent years non-commutative versions of the Anzai skew-products recalled above have been obtained in the framework
of $C^*$-algebras and their automorphisms. The earliest of these versions was first considered on the non-commutative two torus in \cite{OP}, and later studied in \cite{DFGR} with a focus on ergodic theory aspects. It was realized soon after that most of the results on the Anzai skew-products on the two torus carry over to the more general non-commutative setting of crossed products, \cite{ DFR1}.
 In particular,  the ergodicity of a distinguished invariant state, which will be recalled later on in the paper,  is ruled by the behaviour of certain cohomological equations, whose unknowns are now operators in a von Neumann algebra, which is commutative as long as the crossed product is obtained out of a commutative $C^*$-algebra.  Moreover, 
the ergodicity of this state still amounts to the unique ergodicity of the system. 
Allowing more generality, one is led to consider unique ergodicity with respect to the fixed-point subalgebra as well. This property was first introduced by Abadie and Dykema \cite{AD} as a generalization of
unique ergodicity where one requires that every state on the fixed-point subalgebra has only one invariant extension to the whole algebra.  In \cite{DFR1}  it was shown that for a skew-product unique ergodicity w.r.t. the fixed-point subalgebra implies continuity of all solutions to the cohomological equations. The converse was proved to hold only for classical systems, leaving out the case of a general crossed product. One of our original motivations to write the present paper was to bridge this gap.  Another was to answer in our setting a question   posed about a decade ago by
Abadie and Dykema, \cite[Question 3.4]{AD},  whether the uniqueness of an invariant conditional expectation onto the fixed-point subalgebra implies unique ergodicity w.r.t. the fixed-point subalgebra.
It turns out that this is the case for skew products, as we prove in Theorem \ref{AbDyk}. This result is
worth comparing with the general picture, where the question is in fact answered in the negative, as 
shown by a counterexample due to D. Ursu, Example \ref{Ursu}.\\
Having these goals in mind, we have taken this opportunity to further extend our scope by letting a locally compact group $G$ (satisfying a number of mild assumptions), rather than a single automorphism, act on our crossed products. Not only does this generality allow us to consider a wider class of $C^*$-dynamical systems, but, and possibly more importantly, it lays greater emphasis on the interpretation of the cohomological equations as coboundary equations for those  $1$-cocycles on $G$ that the dynamics is assigned through.
For a comprehensive account in the classical setting of skew-product extension by the action of a locally compact group the reader is referred to \cite{KS, K}. \\
 That said, we can now move on to present our results while describing how the paper is organized as well. 
The preliminary section acquaints the reader with most of the prerequisites needed to give self-contained proofs of the paper's results.\\
Section \ref{prel} provides a rather quick account of what we deem is strictly necessary to know about the crossed product of a $C^*$-algebra by a single automorphism.
Section \ref{generg} contains a detailed treatment of unique ergodicity w.r.t. the fixed-point subalgebra in our general context of non-commutative
$C^*$-algebras acted upon by a unimodular, amenable, second countable locally compact group $G$.  In more detail, Proposition \ref{ADgen} is the characterization
of unique ergodicity w.r.t. the fixed point subalgebra in terms of the pointwise norm convergence of the
Ces\`{a}ro  averages, whereas Proposition \ref{vonNeumannG} is a general version of von Neumann's ergodic theorem, where the convergence of the 
Ces\`{a}ro averages holds at the Hilbert space level. Although these results might be more or less known or at least easily guessed, their proofs are not entirely
trivial, nor are their statements easily found in the literature of the field. Therefore, we have decided to include them in the paper all the same in the hope they may also serve
as a quick yet readable introduction to the topic.\\
In Section 3, starting from  a uniquely ergodic system $(\ga, G, \theta)$, with unique $G$-invariant state $\om_0$, and
a $1$-cocycle $(u_g)_{g\in G}$  of $G$ in $\bt$, we define the extended skew product
as the action of $G$ on  the crossed product $\ga\rtimes_\a\bz$ through the $*$-automorphisms $\Phi_g^u$ given by
\begin{equation*}
\Phi_g^u(a):=\theta_g(a), \textrm{for all}\,a\in\ga, \quad \Phi_g^u(V):=u_g V\, .
\end{equation*}
where $V$ is the unitary implementator of $\a$, that is $VaV^*=\a(a)$, for all $a$ in $\ga$.
The action of $\Phi_g^u$ on the powers of $V$ is given by $\Phi_g^u(V^n)= u_g^{(n)}V^n$, where 
 $(u_g^{(n)})_{g\in G}$ is for every $n$ in $\bz$ a new  $1$-cocycle of $G$ which takes account of $\a$.\\
In analogy with the classical case, the state $\om$ on  $\ga\rtimes_\a\bz$ given by 
\begin{equation*}
\om(aV^n)=\om_0(a)\delta_{n, 0}\,,\,\, a\in\ga,\,n\in\bz
\end{equation*}
 is a distinguished invariant state, which we shall refer to as the canonical invariant state.\\
In Section \ref{classify}, we show that two such  systems are equivalent, either as topological or measurable systems, 
if and only if the corresponding $1$-cocycles are cohomologous, with a continuous or a measurable coboundary, Theorem \ref{classification}.\\
In Section \ref{dichotomy}, we analyze the fixed-point subalgebra by proving that a strong dichotomy occurs:
it is either trivial or isomorphic with $C(\bt)$, the continuous functions on $\bt$, Proposition \ref{dicho}. Moreover, 
for the fixed-point subalgebra to be trivial it is necessary and sufficient that none of the cocycles $\{(u_g^{(n)})_{g\in G}\colon n\neq 0\}$ is a continuous coboundary, Proposition \ref{weakergo}. Section \ref{dichotomy} ends
with Theorem \ref{uniquelyerg}, where unique ergodicity w.r.t. the fixed-point subalgebra is shown to hold if and only if for every $n$ in $\bz$ the $1$-cocycle $(u_g^{(n)})_{g\in G}$ is not a measurable non-continuous coboundary, thus solving the problem left open in \cite{DFR1}.\\
Section \ref{invstate} is devoted to the study of the convex set of all invariant states of the extended skew-product.
Once again, a strong dichotomy takes place, for this set is either a singleton or affinely homeomorphic with $\cp(\bt)$, the set of all Borel probability measures on $\bt$, Theorem \ref{invstates}. Phrased differently, as soon as
the system fails to be uniquely ergodic, namely some of the $1$-cocycles $(u_g^{(n)})_{g\in G}$ are (possibly measurable) coboundaries, its invariant states feature a rich structure, which is morally reminiscent
of the fixed-point subalgebra even at the level of the von Neumann algebra generated in the GNS representation
of the canonical invariant state. Finally, Theorem \ref{AbDyk} completes the characterization of unique ergodicity w.r.t. the fixed-point subalgebra in terms of invariant conditional expectations onto the fixed-point subalgebra.
In particular, this result extends \cite[Theorem 4.2]{DFR2} showing that actually all non-commutative skew-products are well behaved in relation to Abadie and Dykema's problem.\\
As an outlook for future research, we would like to pose a question: is it true that  a non-commutative skew product is minimal in the sense of Longo and Peligrad, \cite[Definition 2.1]{LP}, if and only if none of the cocycles $\{(u_g^{(n)})_{g\in G}: n\neq 0\}$ is a continuous coboundary? An affirmative answer to this question might yield an example of a non-trivial non-commutative $C^*$-dynamical system that is minimal without being uniquely ergodic. 

\section{Preliminaries}

\subsection{Crossed products}\label{prel}
Throughout the paper, we will make extensive use of the crossed product of a given $C^*$-algebra by the action of a single automorphism.
Therefore, to keep the exposition as self-contained as we possibly can, we start by recalling what that crossed product is.
Given a  (unital) $C^*$-algebra $\ga$ and an automorphism $\a$ of $\ga$, the  universal (or maximal) crossed product of $\ga$ by $\a$ is the
universal $C^*$-algebra generated by $\ga$ and a unitary $V$ such that $VaV^*=\alpha(a)$ for all $a$ in $\ga$.
Such a $C^*$-algebra can be shown to exist, see {\it e.g.} \cite{D}, and it is denoted by  $\ga\rtimes_\a\bz$.\\
By its very definition, the unitary $V$ is actually defined up to a phase, meaning that the one-dimensional torus, $\bt$, acts naturally on  $\ga\rtimes_\a\bz$
through the so-called gauge automorphisms $\{\a_z: z\in\bt\}$, determined by $\a_z(a)=a$ for all $a$
 in $\ga$  and $\a_z(V)=zV$. In particular, it follows that the spectrum of $V$ is the whole $\bt$.\\
The $C^*$-subalgebra $\ga\subset\ga\rtimes_\a\bz$ is easily seen to be the invariant subalgebra under the gauge action.
Furthermore, by averaging the gauge action one obtains a canonical (faithful) conditional expectation, $E$, from  $\ga\rtimes_\a\bz$ onto $\ga$.
Explicitly, $E$ is defined as $E(x):=\int_\bt \a_z(x){\rm d}m(z)$, $x$ in $\ga\rtimes_\a\bz$, where $m$ is the normalized-Lebesgue measure of $\bt$. Note
that $E$ is completely determined by $E(aV^n)=a\delta_{n,0}$, for all $a$ in $\ga$ and $n$ in $\bz$, where $\delta$ is the Kronecker symbol.\\
The canonical conditional expectation allows for the Fourier expansion in the setting of crossed products. 
Indeed, it is known that any $x$ in $\ga\rtimes_\a\bz$ can be written as
$x=\sum_{k\in\bz} V^kE(V^{-k}x)$, where the convergence of the series is in norm but in the  Ces\`{a}ro sense, \cite[Theorem VIII.2.2]{D}.\\

In the sequel we will need to identify $C^*(\ga, V^n)$ with $\ga\rtimes_{\a^n}\bz$ for a fixed integer $n$. 
This is possible thanks to the following proposition, whose proof
is included in full detail for want of a reference.

\begin{prop}\label{injectivity}
For every $n$ in $\bz$, the $C^*$-subalgebra $C^*(\ga, V^n)\subset \ga\rtimes_\a\bz$ is canonically
isomorphic with $\ga\rtimes_{\a^n}\bz$.
\end{prop}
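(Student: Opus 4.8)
The plan is to exhibit the isomorphism explicitly via the universal property of the crossed product $\ga \rtimes_{\a^n} \bz$, then show it is injective onto $C^*(\ga, V^n)$. First I would observe that inside $\ga \rtimes_\a \bz$ the pair $(\ga, V^n)$ satisfies $V^n a V^{-n} = \a^n(a)$ for all $a \in \ga$, and $V^n$ is a unitary, so by the universal property of $\ga \rtimes_{\a^n} \bz$ there is a unique $*$-homomorphism $\pi \colon \ga \rtimes_{\a^n} \bz \to \ga \rtimes_\a \bz$ sending $\ga$ identically to $\ga$ and the canonical unitary $W$ (with $W a W^* = \a^n(a)$) to $V^n$. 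Its range is exactly $C^*(\ga, V^n)$, so the only thing to prove is that $\pi$ is injective.

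The key idea for injectivity is to use the canonical conditional expectation and the gauge action as a "Fourier analysis" tool on both sides. On $\ga \rtimes_{\a^n} \bz$ let $E_n$ denote the canonical faithful conditional expectation onto $\ga$, determined by $E_n(a W^k) = a\,\delta_{k,0}$; on $\ga \rtimes_\a \bz$ let $E$ be the one described in the preliminaries. Since $\pi$ restricts to the identity on $\ga$, it suffices to show that $\pi$ intertwines the two expectations, i.e.\ $E \circ \pi = \pi \circ E_n$, because then for any $x$ in the kernel of $\pi$ we get $E_n(W^{-k} x) = 0$ for all $k$: indeed $\pi(W^{-k} x) = V^{-nk}\pi(x) = 0$, hence $E(\pi(W^{-k}x)) = 0 = \pi(E_n(W^{-k}x))$, and $\pi$ is isometric on $\ga$, so $E_n(W^{-k}x) = 0$; the Ces\`aro--Fourier expansion $x = \sum_k W^k E_n(W^{-k}x)$ then forces $x = 0$, and faithfulness of $E_n$ on positive elements gives the conclusion directly as well. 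To get the intertwining relation, note that the gauge action $\{\a_z\}$ on $\ga \rtimes_\a \bz$ satisfies $\a_z(V^n) = z^n V^n$, so restricting the gauge circle action to the subgroup (or rather reparametrising $z \mapsto z^n$) one sees that $C^*(\ga, V^n)$ carries a circle action compatible with the canonical gauge action on $\ga \rtimes_{\a^n} \bz$ under $\pi$; averaging over this circle and comparing with the defining integral formula for $E$ yields $E \circ \pi = \pi \circ E_n$ on generators $a W^k$, hence everywhere by density and continuity.

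I expect the main obstacle to be the bookkeeping around the gauge actions: the gauge action on $\ga \rtimes_{\a^n}\bz$ and the gauge action on $\ga \rtimes_\a \bz$ do not match up naively under $\pi$, since $\a_z(V^n) = z^n V^n$ rather than $z V^n$. One has to be careful to either (i) pass to the index-$n$ subgroup $\{z \in \bt : z^n = 1\}$-average, which is finite and does not directly compute $E$, or better (ii) use the substitution $w = z^n$: as $z$ ranges over $\bt$ with Haar measure, $z^n$ also ranges over $\bt$ with Haar measure (the map $z \mapsto z^n$ pushes $m$ forward to $m$), so $\int_\bt \a_z(x)\,dm(z)$ restricted to $x \in C^*(\ga, V^n)$ equals the average of the "$w$-gauge" action, which is precisely what $\pi \circ E_n$ computes. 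Making this change-of-variables argument precise is the one genuinely non-formal point; everything else — existence of $\pi$, identification of its range, the Ces\`aro expansion argument — is routine once the preliminaries of Section \ref{prel} are invoked.
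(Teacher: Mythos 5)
Your proposal is correct and follows essentially the same route as the paper: the surjection $\Psi$ (your $\pi$) comes from the universal property, and injectivity is obtained by reading off Fourier coefficients in $\ga\rtimes_\a\bz$. The only difference is presentational: where the paper simply invokes ``uniqueness of the Fourier expansion'' of $\Psi(x)=\sum_k V^{nk}a_k$, you unpack that phrase into the intertwining relation $E\circ\pi=\pi\circ E_n$ and the faithfulness of $E_n$ (note that both arguments, like the statement itself, tacitly require $n\neq 0$).
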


\begin{proof}
Thanks to  the universal property enjoyed by $\ga\rtimes_{\a^n}\bz$, there exists a surjective $*$-homomorphism
$\Psi\colon\ga\rtimes_{\a^n}\bz\rightarrow C^*(\ga, V^n)$ such that $\Psi(a)=a$, for all $a$ in $\ga$, and $\Psi(W)=V^n$, where
$W$ in $\ga\rtimes_{\a^n}\bz$ is the implementator of $\a^n$.\\
We need to show that $\Psi$ is injective. Let $x$ in  $\ga\rtimes_{\a^n}\bz$ be such that $\Psi(x)=0$.
Considering the Fourier expansion of $x$ allows us to rewrite it as $x=\sum_{k\in\bz}  W^ka_k$, for some sequence $\{a_k: k\in\bz\}\subset\ga$.
Correspondingly $\Psi(x)$, too, can be expanded as $\Psi(x)=\sum_{k\in\bz} V^{nk}a_k$. Now $\Psi(x)=0$ is only possible if $a_k=0$ for all $k$ in $\bz$ by virtue of
the uniqueness of the Fourier expansion in $\ga\rtimes_\a\bz$. But then $x=0$ as well, and we are done. 
\end{proof}

\subsection{General results on $C^*$-dynamical systems}\label{generg}

By a $C^*$-dynamical system we mean a triple $(\ga, G, \a)$, where $\ga$ is a (unital) $C^*$-algebra, $G$ a locally compact group, and
$\a: G\rightarrow {\rm Aut}(\ga)$ a group homomorphism. In other words, associated with every $g$ in $G$ there is an automorphism
$\a_g$ of $\ga$ such that $\a_{gh}=\a_g\circ \a_h$ for all $g, h$ in $G$. Furthermore, as is commonly assumed, we also need to require the following continuity condition:
for every $a$ in $\ga$, the map $G \ni g\mapsto \a_g(a)\in\ga$ is continuous as a function from $G$ with its topology to $\ga$ endowed with its norm topology.\\
Given a $C^*$-dynamical system $(\ga, G, \a)$, a state $\om$ on $\ga$ is said to be invariant under the dynamics, or more shortly $G$-invariant, if
$\om\circ\a_g=\om$ for all $g$ in $G$. The (compact convex) set of all $G$-invariant states is denoted by $\cs(\ga)^G$.\\
A $C^*$-dynamical system is said to be {\it uniquely ergodic} if $\cs(\ga)^G$ is a singleton. Phrased differently, unique ergodicity means
that the system only has one invariant state. Unique ergodicity is the strongest notion of ergodicity one may consider. For instance, under suitable assumptions on the group $G$, {\it e.g.} amenability, it implies
that the fixed-point subalgebra, $\ga^G:=\{a\in\ga: \a_g(a)=a, g\in G\}$,  is trivial.\\
An interesting generalization of unique ergodicity was introduced by Abadie and Dykema in \cite{AD}, where they consider the action of a single automorphism, 
that is $G=\bz$, by allowing
the fixed-point subalgebra to be non-trivial. Precisely, a $C^*$-dynamical system $(\ga, \bz, \a)$ is {\it uniquely ergodic w.r.t. the fixed-point subalgebra} if
any state on $\ga^\bz$ admits exactly one $\bz$-invariant extension to the whole $C^*$-algebra $\ga$. This property can be characterized in a number of ways,
the most relevant of which is that it is equivalent to the pointwise convergence in norm of the  Ces\`{a}ro averages, see \cite[Theorem 3.2]{AD}.  As is noted in that paper, the convergence in norm of the  Ces\`{a}ro averages has the important consequence that there exists a unique invariant conditional expectation onto the fixed-point subalgebra, which is of course
obtained by taking the limit of the  Ces\`{a}ro averages themselves.  In the same paper, Abadie and Dykema raised
the question as to whether the uniqueness of such an invariant conditional expectation is enough
for unique ergodicity w.r.t. the fixed-point algebra to hold. The answer in the negative to that problem has recently been found by D. Ursu, who exhibited an example of a classical dynamical system which fails to be
uniquely ergodic w.r.t. the fixed point subalgebra while featuring a unique conditional expectation.
Since this example has not been published and is rather part of a private communication, we would like to include it below.
\begin{examp}\label{Ursu}
Let $X = \bz\bigcup\{-\infty\}\bigcup\{\infty\}$ be the two-point compactification of $\bz$. 
Let $\a$ be the “shift by 1” homeomorphism. Note that $f\in C(X)$ with $f\circ\a=f$ implies $f$ constant.\\
Let $Y$ be the space that is obtained by glueing together $[0, 1]$ and $X$ identifying $1$ in the former space and $-\infty$ in the latter. We may extend $\a$ to $Y$ by acting trivially on $[0, 1]$. We continue to denote this extension
by $\a$.
Now $C[0, 1]$ embeds unitally into $C(Y )$ by extending any of its functions $f$  to take the constant value $f(1) =f(-\infty)$ on all of $X$. Under this embedding, it is clear that $C(Y )^\a$ = $C[0, 1]$.\\
We verify that there is a unique conditional expectation $E\colon C(Y )\rightarrow  C(Y)^\a\cong C[0, 1]$. 
Let $f\in C(Y )$ and let be $z$ in $[0, 1)$ be a fixed point. 
Now let $h$ be any function in $C(Y)$ such such that $h(z)=1$ with $h$ vanishing on $X$. Note that
$h$ sits in $C(Y)^\a\cong C[0, 1]$. We have:
\begin{align*}
E(f)(z) = E(f)(z) · h(z) = E(fh)(z) = (fh)(z) = f(z) · h(z) = f(z)\,.
\end{align*}
Since this is true for any $z \in [0, 1)$, by continuity, $E(f)(z) = f(z)$ for $z = 1$ in $[0, 1]$ as well. 
In other words, the only conditional expectation is the “restriction” map onto [0, 1]. This is clearly $\a$-invariant as well.
However, we observe that the two states $\delta_{-\infty}$ and $\delta_\infty$ are two different $\a$-invariant extensions of the state $\delta_1$ on $C[0, 1]$.
\end{examp}

In order to deal with our skew-product dynamical systems, we need to generalize Abadie and Dykema's characterization of unique ergodicity w.r.t. the fixed-point subalgebra so as to cover a wider class of groups.
Precisely, henceforth we will be working under the assumption that $G$ is a unimodular, amenable, second-countable locally compact group.
We will denote by $\mu_G$ its (bi-invariant) Haar measure.
The hypotheses made on the group guarantee that $G$ posseses a two-sided F{\o}lner sequence $\{F_n: n\in\bn\}$, see \cite[Section 1.1, Proposition 2]{OW}.
This means that there exists a sequence of compact subsets $F_n\subset G$, $n$ in $\bn$, such that for every $h$ in $G$ one has
$\lim_{n\rightarrow\infty} \frac{\mu_G(F_n\Delta hF_n)}{\mu_G(F_n)}=0$ and $\lim_{n\rightarrow\infty} \frac{\mu_G(F_n\Delta F_nh)}{\mu_G(F_n)}=0$,
where $\Delta$ denotes the symmetric difference between sets.\\
Given a $C^*$-dynamical system $(\ga, G, \a)$, for every $a$ in $\ga$, we define the sequence
$\{M_n(a): n\in\bn\}$ of its Ces\`{a}ro averages as
\begin{equation*}\label{Cesaromean}
M_n(a):=\frac{1}{\mu_G(F_n)}\int_{ F_n}\a_g(a){\rm d}\mu_G(g)\,,
\end{equation*}
where $\{F_n: n\in\bn\}$ is any (two-sided) F{\o}lner sequence. The integral is
understood in the sense of Bochner and is well defined thanks to the continuity of the function $G \ni g\mapsto \a_g(a)\in\ga$, see {\it e.g.}
\cite[Theorem 1, Section 5, Chapter V]{Y} .\\
Note that, for each $n$ in $\bn$, $\ga\ni a\mapsto M_n(a)\in\ga$ defines a bounded linear map from $\ga$ to itself, with 
$\|M_n\|\leq 1$ for all $n$.\\
That said, we are ready to state the announced generalization of Abadie and Dykema's characterization of unique ergodicity with respect to 
the fixed-point subalgebra in the general framework of group actions on non-commutative $C^*$-algebras.

\begin{prop}\label{ADgen}
For a $C^*$-dynamical system $(G, \a, \ga)$ with $G$ as above, the following are equivalent:
\begin{itemize} 
\item [(i)] Every state on $\ga^G$ has a unique $G$-invariant extension to $\ga$;
\item [(ii)] For every $a$ in $\ga$, $M_n(a)$ converges in norm to some (invariant) element in $\ga$.
\end{itemize}
\end{prop}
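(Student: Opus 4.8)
The plan is to prove the two implications separately, exploiting the Følner property of $G$ throughout and mimicking the classical argument of Abadie--Dykema adapted to Bochner integrals over the group.

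\emph{From (ii) to (i).} Assume the Ces\`aro averages $M_n(a)$ converge in norm, for every $a\in\ga$, to an element we denote $E(a)$. First I would check that $a\mapsto E(a)$ is a well-defined linear contraction landing in $\ga^G$: $G$-invariance of $E(a)$ follows from the left Følner condition $\mu_G(F_n\Delta hF_n)/\mu_G(F_n)\to 0$, which forces $\a_h(M_n(a))-M_n(a)\to 0$ in norm; unitality and positivity are clear, so $E$ is a conditional expectation onto $\ga^G$, and it is manifestly $G$-invariant. Now let $\psi$ be a state on $\ga^G$ and let $\tilde\psi$ be any $G$-invariant extension to $\ga$. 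For $a\in\ga$ and each $n$, $G$-invariance gives $\tilde\psi(a)=\tilde\psi(M_n(a))$ since $\tilde\psi(\a_g(a))=\tilde\psi(a)$ and the Bochner integral commutes with the bounded functional; letting $n\to\infty$ and using norm convergence yields $\tilde\psi(a)=\tilde\psi(E(a))=\psi(E(a))$, because $E(a)\in\ga^G$. Hence $\tilde\psi$ is uniquely determined by $\psi$, namely $\tilde\psi=\psi\circ E$, which proves (i). (One should also note $\psi\circ E$ is genuinely $G$-invariant and extends $\psi$, so an invariant extension exists.)

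\emph{From (i) to (ii).} This is the direction I expect to be the main obstacle. Fix $a=a^*\in\ga$ (the self-adjoint case suffices by linearity). I would first show that the weak-$*$ limit points of $\{M_n(a)\}$ in $\ga^{**}$, or rather the numerical limit points $\lim_k \phi(M_{n_k}(a))$ for states $\phi$, are all equal. Concretely, suppose two subsequences of $\{M_n(a)\}$ had, after testing against states, different behaviour; one extracts states witnessing this. The key point is that for any state $\phi$ on $\ga$, the functional $\phi\circ E_{\mathrm{w}}$, where $E_{\mathrm w}$ is a weak-$*$ cluster point of $M_n$ (which exists by Banach--Alaoglu applied to the maps $M_n$ viewed in an ultrapower, or by a diagonal/ultrafilter argument), restricts to a state on $\ga^G$ whose unique invariant extension must be $\phi\circ E_{\mathrm w}$ itself; by (i) this pins down the value $\phi(E_{\mathrm w}(a))$ independently of which cluster point was chosen. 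Thus all cluster points agree on $\ga^G$-"tested" quantities. The upgrade from weak-$*$ convergence of the means to norm convergence is the delicate step: here I would invoke the Følner condition more forcefully, together with the standard trick that for self-adjoint $a$ the oscillation $\|M_n(a)-M_m(a)\|$ can be controlled because any Hahn--Banach functional separating a putative non-convergent net gives rise, via invariance and the mean ergodic decomposition, to two distinct invariant extensions of one state on $\ga^G$, contradicting (i). Equivalently, one argues that $\overline{\mathrm{conv}}\{\a_g(a):g\in G\}$ meets $\ga^G$ in exactly one point (this uses amenability via a fixed-point argument) and that the Følner means converge to that point in norm.

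\emph{Remark on the technical core.} The genuinely non-trivial ingredient, and the step I would flag as the crux, is establishing that the unique common value forced by (i) is approached \emph{in norm} rather than merely weakly; in the commutative $\bz$-case this is Abadie--Dykema's Theorem 3.2, and the passage to a Følner sequence for a unimodular amenable second-countable $G$ requires checking that the bi-invariance of $\mu_G$ and the two-sided Følner property suffice to run the same asymptotic-invariance estimates. I would structure the write-up so that this estimate is isolated as the one place where the full strength of the hypotheses on $G$ is used, with the rest of the argument being the formal duality between invariant states, invariant conditional expectations, and norm-convergent means sketched above.
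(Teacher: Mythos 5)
Your implication (ii) $\Rightarrow$ (i) is correct and essentially identical to the paper's: any invariant state satisfies $\widetilde\psi(a)=\widetilde\psi(M_n(a))$, so norm convergence of the means pins down $\widetilde\psi$ on all of $\ga$ from its restriction to $\ga^G$. No issues there.

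The direction (i) $\Rightarrow$ (ii) has a genuine gap, and it is exactly at the point you yourself flag as the crux. Agreement of all weak-$*$ cluster points of $\{M_n(a)\}$ (or of the numerical limit points $\phi(M_{n_k}(a))$) does not yield norm convergence, and the two devices you offer to bridge this do not close the argument as stated: a Hahn--Banach functional separating $M_n(a)$ from $M_m(a)$ has no reason to be $G$-invariant, so it does not directly produce two distinct invariant extensions of a single state on $\ga^G$; and the assertion that the Følner means converge in norm to the unique point of $\overline{\mathrm{conv}}\{\a_g(a)\}\cap\ga^G$ is precisely the statement to be proved, not a consequence of the fixed-point argument (which only lives in the weak closure). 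The missing idea is a decomposition of the algebra, not a compactness argument on the means. The paper proceeds as follows: first, (i) upgrades to the statement that every bounded functional on $\ga^G$ has a unique bounded invariant extension; then the subspace $Y=\ga^G+\mathrm{span}\{b-\a_h(b):b\in\ga,\ h\in G\}$ is norm dense in $\ga$, because any functional vanishing on $Y$ vanishes on $\ga^G$ and is invariant, hence is zero by the unique extension property (Hahn--Banach). On $\ga^G$ the means are constant, and on a coboundary $b-\a_h(b)$ one computes directly that
\begin{equation*}
\|M_n(b-\a_h(b))-M_{n+k}(b-\a_h(b))\|\leq\Bigl(\tfrac{\mu_G(F_n\Delta F_nh)}{\mu_G(F_n)}+\tfrac{\mu_G(F_{n+k}\Delta F_{n+k}h)}{\mu_G(F_{n+k})}\Bigr)\|b\|,
\end{equation*}
so $\{M_n\}$ is Cauchy on $Y$ by linearity, and an $\varepsilon/3$ argument using $\|M_n\|\leq 1$ transfers norm convergence from the dense subspace $Y$ to all of $\ga$. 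This is where the right Følner condition is actually used, and it is an elementary estimate on coboundaries rather than a separation argument on the averages themselves. Without this (or an equivalent mean-ergodic decomposition of $\ga$ into fixed part plus closed span of coboundaries), your sketch does not constitute a proof of (i) $\Rightarrow$ (ii).
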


\begin{proof}
The implication (ii) $\Rightarrow$ (i) immediately follows from  the equality $\om(a)=\om(M_n(a))$, $a$ in $\ga$ and $n$ in $\bn$, which holds for any invariant state $\om$ and which shows that $\om$ is uniquely determined by its restriction
to $\ga^G$.\\
As for the reverse implication, we start by observing that (i) implies that any bounded linear functional
on $\ga^G$ has a unique bounded $\a$-invariant extension to the whole $\ga$, {\it cf.} \cite[Observation 3.1]{AD}.
We aim to  prove that the linear subspace $Y:=\ga^G+ {\rm span}\{a-\a_h(a): a\in\ga, h\in G\}$ is norm dense
in $\ga$. To this end,  by the Hahn-Banach theorem it is enough to show that if a bounded linear functional 
vanishes on $Y$, then it is zero on $\ga$. If $\varphi$ is any such functional, then
$\varphi$ is zero on $\ga^G$ and  is $\a$-invariant as well. But then $\varphi$ must be zero on $\ga$ by the unique extension property.\\
We next show that (ii) holds for $a=b-\a_h(b)$, with $b$ in $\ga$, for a fixed $h$ in $G$. 
We do this by showing that for such elements $a$ the corresponding sequence $\{M_n(a): n\in\bn\}$ is Cauchy:
\begin{align*}
&\|M_n(b-\a_h(b))- M_{n+k}(b-\a_h(b))\|\\
&=\left\|\frac{1}{\mu_G(F_n)}\int_{ F_n}(\a_g(b-\a_h(b)){\rm d}\mu_G(g)- \frac{1}{\mu_G(F_{n+k})}\int_{ F_{n+k}}(\a_g(b-\a_h(b)){\rm d}\mu_G(g)\right\|\\
&\leq\left(\frac{\mu_G(F_n\Delta F_nh)}{\mu_G(F_n)}+\frac{\mu_G(F_{n+k}\Delta F_{n+k}h)}{\mu_G(F_{n+k})}\right)\|b\|\leq \varepsilon
\end{align*}
for all $k$ in $\bn$ as soon as $n\geq N_\varepsilon$, where  $N_\varepsilon$ is such that 
$\frac{\mu_G(F_l\Delta F_lh)}{\mu_G(F_l)}\leq \frac{\varepsilon}{2\|b\|}$ for all $l\geq N_\varepsilon$, and such
$N_\varepsilon$ exists by definition of a F{\o}lner sequence.\\
By linearity of the average operators $M_n$, condition (ii) holds for all $a$ in $Y$.\\
The full conclusion is reached thanks to the density of $Y$ in $\ga$ by an $\frac{\varepsilon}{3}$-argument.
Let $a$  be in $\ga$ and let $b$ be in $Y$ with $\|a-b\|\leq\frac{\varepsilon}{3}$. Then we have:
\begin{align*}
&\|M_n(a) - M_{n+k}(a)\|\\
&\leq\|M_n(a) - M_n(b)\|+\|M_n(b) - M_{n+k}(b)\|+\|M_{n+k}(b) - M_{n+k}(a)\|\\
&\leq\frac{2\varepsilon}{3}+\|M_n(b) - M_{n+k}(b)\|\leq \varepsilon\,,
\end{align*}
for all $k$ in $\bn$ and $n\geq N_\varepsilon$, where  $N_\varepsilon$ in $\bn$ is such that
$n\geq N_\varepsilon$ implies  $\|M_n(b) - M_{n+k}(b)\|\leq \frac{\varepsilon}{3}$.
The proof is thus complete.
\end{proof}
At this point the characterization of unique ergodicity, which is a more or less known fact, {\it cf.} \cite[Proposition 2.4]{LP}, is an easy application of Proposition \ref{ADgen}
\begin{cor}
A $C^*$-dynamical system $(G, \a, \ga)$ with $G$ as above is uniquely ergodic with unique $G$-invariant state 
$\om$ if and only if for all $a$ in $\ga$  one has $\lim_{n\rightarrow\infty }M_n(a)=\om(a)1$ in norm.
\end{cor}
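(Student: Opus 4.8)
The plan is to deduce this corollary directly from Proposition \ref{ADgen}, by showing that unique ergodicity is equivalent to the combination of ``unique ergodicity w.r.t.\ the fixed-point subalgebra'' together with triviality of the fixed-point subalgebra $\ga^G=\bc 1$. First I would prove the ``only if'' direction: if the system is uniquely ergodic, then $\ga^G$ must be trivial. This uses amenability of $G$: if $a=a^*\in\ga^G$ were not a scalar, the two extreme points of its spectrum would give rise, via an invariant mean applied to the states $\om\circ\mathrm{ev}$, to two distinct $G$-invariant states separating the values $\min\spec(a)$ and $\max\spec(a)$, contradicting uniqueness. Concretely, for a character $\chi$ of the (commutative) $C^*$-algebra generated by $a$ and $1$ realizing $\chi(a)=\max\spec(a)$, extend $\chi$ to a state $\psi$ on $\ga$ by Hahn--Banach and then average $\psi$ against a F{\o}lner sequence, passing to a weak-$*$ limit point; the resulting state is $G$-invariant and still takes the value $\max\spec(a)$ on $a$, and similarly one produces an invariant state attaining $\min\spec(a)$. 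Hence if there is only one invariant state, $a$ is scalar, so $\ga^G=\bc 1$.

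Next, assuming unique ergodicity, condition (i) of Proposition \ref{ADgen} holds trivially, since $\ga^G=\bc 1$ has a unique state and any $G$-invariant state on $\ga$ is an extension of it; but uniqueness of the invariant state is precisely the hypothesis. Therefore by the equivalence (i)$\Leftrightarrow$(ii) in Proposition \ref{ADgen}, for every $a\in\ga$ the Ces\`aro averages $M_n(a)$ converge in norm to some invariant element of $\ga=\bc 1\oplus(\ker$-part$)$; that limit lies in $\ga^G=\bc 1$, so $M_n(a)\to \lambda(a)1$ for a scalar $\lambda(a)$. Since each $M_n$ preserves the (unique) invariant state $\om$, i.e.\ $\om(M_n(a))=\om(a)$, passing to the limit gives $\lambda(a)=\om(M_n(a))\to\om(a)$, whence $M_n(a)\to\om(a)1$ in norm, as claimed.

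For the converse, suppose $M_n(a)\to\om(a)1$ in norm for all $a\in\ga$. If $\om'$ is any $G$-invariant state, then $\om'(a)=\om'(M_n(a))$ for every $n$ (by $G$-invariance of $\om'$ and the averaging structure of $M_n$, using unimodularity to push the invariance through the Bochner integral), and letting $n\to\infty$ yields $\om'(a)=\om'(\om(a)1)=\om(a)$. Hence $\om'=\om$ and the system is uniquely ergodic with unique invariant state $\om$.

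I expect the main obstacle to be the ``only if'' direction, specifically the construction of two distinct invariant states when $\ga^G\neq\bc 1$: one must take some care that, in the non-commutative setting, a character on the abelian subalgebra generated by a single self-adjoint non-scalar element of $\ga^G$ extends to a state on $\ga$ and that a F{\o}lner-averaged weak-$*$ limit of such a state is genuinely $G$-invariant (this is where amenability via the two-sided F{\o}lner sequence is essential) while still distinguishing $\min\spec(a)$ from $\max\spec(a)$. Everything else is a routine limiting argument built on Proposition \ref{ADgen} and the identity $\om(M_n(a))=\om(a)$ for invariant states $\om$.
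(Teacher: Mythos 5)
Your proof is correct and follows essentially the same route as the paper: both reduce the statement to showing that unique ergodicity forces $\ga^G=\bc 1$ by extending distinct states on $\ga^G$ to all of $\ga$ and averaging along a F{\o}lner sequence to produce two distinct $G$-invariant states, and then invoke Proposition \ref{ADgen} together with the identity $\om(M_n(a))=\om(a)$. The only cosmetic difference is that you manufacture the two states on $\ga^G$ from the spectral extremes of a non-scalar self-adjoint element, whereas the paper simply picks any two distinct states on the non-trivial unital algebra $\ga^G$.
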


\begin{proof}
It is enough to show that unique ergodicity implies $\ga^G=\bc$. If $\ga^G$ is not trivial, then 
there exist at least two different states $\om_1, \om_2$ on it. Denote by $\widetilde{\om}_i$ is any  extension to
the whole $\ga$ of $\om_i$, $i=1, 2$.
Consider the two sequences of states $\left\{\frac{1}{\mu_G(F_n)}\int_{ F_n}\widetilde{\om}_i\circ\a_g \,{\rm d}\mu_G(g): n\in\bn\right\}$, $i=1, 2$. 
By compactness, for $i=1, 2$, there exists  $\varphi_i$, which is an  accumulation point of $\left\{\frac{1}{\mu_G(F_n)}\int_{ F_n}\widetilde{\om}_i\circ\a_g \,{\rm d}\mu_G(g): n\in\bn\right\}$. By construction
$\varphi_1$ and $\varphi_2$
are both $G$-invariant, and yet they do not agree on
$\ga^G$, hence $\varphi_1\neq \varphi_2$.
\end{proof}

The last proposition of this section is a von Neumann ergodic theorem tailored to our framework. 
Although it is likely to be a known result, we do include
a complete proof all the same, for want of a reference. 
Before stating it, let us also establish some notation.\\
Given a $G$-invariant state $\om$, for all $g$ in $G$, we denote by $V_g^\om$ the unitary implementator of $\a_g$ on the GNS Hilbert space
$\ch_\om$, that is
\begin{equation}\label{unitaryimpl}
V_g^\om\pi_\om(a)\xi_\om:=\pi_\om(\a_g(a))\xi_\om,\,\,\textrm{for all}\,\, a \in \ga\,.
\end{equation}
 By definition $V_g^\om\pi_\om(a)(V_g^\om)^*=\pi_\om(\a_g(a))$, for all $a$ in $\ga$ and
$g$ in $G$. We denote by $\ch_\om^G$ the closed subspace of all $G$-invariant vectors of $\ch_\om$, that is $\ch_\om^G:=\{x\in\ch_\om: V_g^\om x=x, \textrm{for all}\, g\in G\}$ and by $E_\om$ the orthogonal projection of $\ch_\om$ onto $\ch_\om^G$.\\
Now, as is easy to verify, the continuity of $G \ni g\mapsto \a_g(a)\in\ga$ (where $a$ is any fixed element of $\ga$) implies that, for any
invariant state $\om$ on $\ga$, the map $G\ni g\mapsto V_g^\om\in
\cb(\ch_\om)$ is a strongly continuous unitary representation of $G$, namely, for any vector $x$ in $\ch_\om$, the function 
$G\ni g\mapsto V_g^\om x\in\ch_\om$ is norm continuous. We are now ready to state our version of von Neumann's ergodic theorem.

\begin{prop}\label{vonNeumannG}
Let $(\ga, G, \a)$ be a $C^*$-dynamical system with $G$ as above and let $\om$ be in $\cs^G(\ga)$. Then for any $x$ in $\ch_\om$ one has
$$\lim_{n\rightarrow\infty}  \frac{1}{\mu_G(F_n)}\int_{F_n} V_g^\om x\,{\rm d}\mu_G(g)= E_\om x\,,$$
where the convergence holds in norm and $\{F_n: n\in\bn\}$ is any  F{\o}lner sequence for $G$.
\end{prop}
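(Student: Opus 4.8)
The plan is to follow the classical strategy for von Neumann's mean ergodic theorem, adapted to the averaging over a Følner sequence of a strongly continuous unitary representation. Write $A_n x := \frac{1}{\mu_G(F_n)}\int_{F_n} V_g^\om x\,{\rm d}\mu_G(g)$ for the Cesàro averages at the Hilbert space level; each $A_n$ is a contraction on $\ch_\om$ since it is an average of unitaries. First I would decompose $\ch_\om$ into the orthogonal sum of the invariant subspace $\ch_\om^G$ and its orthogonal complement, and verify that $A_n \to E_\om$ separately on each piece. On $\ch_\om^G$ the statement is trivial: if $V_g^\om x = x$ for all $g$, then $A_n x = x = E_\om x$ for every $n$.

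The real content is on the complement. The key algebraic identity is that $\ch_\om^G$ coincides with the kernel of $V_h^\om - I$ intersected over all $h\in G$, and, crucially, that the orthogonal complement of $\ch_\om^G$ equals the closed linear span of the ``coboundary'' vectors $\{(V_h^\om - I)y : y\in\ch_\om,\ h\in G\}$. This is the Hilbert-space analogue of the density of the subspace $Y$ used in the proof of Proposition \ref{ADgen}: a vector $x$ is orthogonal to all $(V_h^\om - I)y$ iff $\langle x, V_h^\om y\rangle = \langle x, y\rangle$ for all $y$ and $h$, iff $(V_h^\om)^* x = x$ for all $h$, iff $x \in \ch_\om^G$. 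Granting this, it suffices to check $A_n z \to 0$ for $z$ in the dense subspace $\mathrm{span}\{(V_h^\om - I)y\}$ of $(\ch_\om^G)^\perp$, and then extend to the closure by the uniform bound $\|A_n\|\le 1$ via the usual $\frac{\eps}{3}$-argument, exactly as in Proposition \ref{ADgen}.

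So the crux is the estimate for $z = (V_h^\om - I)y$. Here one computes
\begin{align*}
A_n (V_h^\om - I) y &= \frac{1}{\mu_G(F_n)}\int_{F_n} \big(V_{gh}^\om y - V_g^\om y\big)\,{\rm d}\mu_G(g)\\
&= \frac{1}{\mu_G(F_n)}\left(\int_{F_n h} V_g^\om y\,{\rm d}\mu_G(g) - \int_{F_n} V_g^\om y\,{\rm d}\mu_G(g)\right),
\end{align*}
using right-invariance of $\mu_G$ (the group is unimodular) to change variables in the first integral. The difference of the two integrals is supported on the symmetric difference $F_n h \,\Delta\, F_n$, so the norm is bounded by $\frac{\mu_G(F_n h \,\Delta\, F_n)}{\mu_G(F_n)}\|y\|$, which tends to $0$ by the (right) Følner property. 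This is the step I expect to be the main obstacle, though only a mild one: one must be careful that the Bochner integral manipulations and the change of variables are legitimate (they are, by strong continuity and unimodularity), and that the two-sided Følner condition is what delivers the conclusion. Finally, for a general $x\in\ch_\om$, write $x = E_\om x + (x - E_\om x)$, apply the already-established cases to each summand, and combine; since $E_\om$ is the orthogonal projection onto $\ch_\om^G$ and $A_n$ fixes $\ch_\om^G$ pointwise while killing its complement in the limit, we obtain $A_n x \to E_\om x$ in norm, as claimed.
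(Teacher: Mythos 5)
Your proposal is correct and follows essentially the same route as the paper's proof: the same splitting of $\ch_\om$ into $\ch_\om^G$ and its orthogonal complement, the same identification of $(\ch_\om^G)^\perp$ as the closed span of the coboundary vectors $(V_h^\om-I)y$, the same change of variables via unimodularity reducing the estimate to $\frac{\mu_G(F_n\Delta F_nh)}{\mu_G(F_n)}\|y\|\to 0$, and the same density-plus-contractivity argument to conclude. No substantive difference.
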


\begin{proof}
If $x$ sits in $\ch_\om^G$, then  $\frac{1}{\mu_G(F_n)}\int_{F_n} V_g^\om x\,{\rm d}\mu_G(g)=x$ for all $n$, and there is nothing to prove.
The thesis will follow if we show that for every $x$ in $(\ch_\om^G)^\perp$ one has 
$$\lim_{n\rightarrow\infty}  \frac{1}{\mu_G(F_n)}\int_{F_n} V_g^\om x\,{\rm d}\mu_G(g)= 0\, .$$
To this end, note that $(\ch_\om^G)^\perp=\overline{span}\{{\rm Ran}(I-V_h^\om): h\in G\}$.  Thus linearity and a standard
approximation argument enable us to consider $x$ of the form $x=(I-V_h^\om)y$ for some $h$ in $G$ and $y$ in $\ch_\om$. For such an 
$x$ we have:
\begin{align*}
 &\frac{1}{\mu_G(F_n)}\int_{F_n} V_g^\om x\,{\rm d}\mu_G(g)= \frac{1}{\mu_G(F_n)}\int_{F_n} V_g^\om (I-V_h^\om)y\,{\rm d}\mu_G(g)\\
&=\frac{1}{\mu_G(F_n)}\int_{F_n} V_g^\om y\,{\rm d}\mu_G(g)-\frac{1}{\mu_G(F_n)}\int_{F_n} V_{gh}^\om y\,{\rm d}\mu_G(g)\\
&=\frac{1}{\mu_G(F_n)}\int_{F_n} V_g^\om y\,{\rm d}\mu_G(g)-\frac{1}{\mu_G(F_n)}\int_{F_nh} V_g^\om y\,{\rm d}\mu_G(g)\\
&=\frac{1}{\mu_G(F_n)}\int_{F_n\setminus F_nh} V_g^\om y\,{\rm d}\mu_G(g)\,
\end{align*}
hence
$$\left\|\frac{1}{\mu_G(F_n)}\int_{F_n} V_g^\om x\,{\rm d}\mu_G(g)\right\|\leq \frac{\mu_G(F_n\Delta F_nh)}{\mu_G(F_n)}\|y\|\rightarrow 0$$
as $n$ goes to $\infty$ thanks to the definition of a F{\o}lner sequence.
\end{proof}
\begin{rem}
The above result continues to hold under the milder assumption that $G$ is only amenable.
\end{rem}

\section{Non-commutative skew-product extensions}
We start with a $C^*$-dynamical system $(\ga, G, \theta)$, where $\ga=C(X_0)$ (with $X_0$ being a metrizable compact space) is a commutative $C^*$-algebra, $G$ a unimodular, amenable, second-countable locally compact
group, and $\theta: G\rightarrow {\rm Aut}(\ga)$ a group homomorphism.
From now on we will always work under the assumption that $(\ga, G, \theta)$ is uniquely ergodic with unique invariant state
$\om_0$ ( $\om_0(f)=\int_{X_0} f{\rm d}\mu_0$, $f$ in $C(X_0)$, for a unique probability Borel measure $\mu_0$).

By a continuous $1$-cocycle on $G$ we will always mean a continuous function 
$u: G\times X_0\rightarrow\bt$ such that
\begin{equation}\label{cocycle1}
u(gh, x)=u(h, \theta_g(x))u(g, x)\,, \,\textrm{for all}\, g, h\,\in G, x\in X_0
\end{equation}
For any fixed $g$ in $G$, we can think of the function $X\ni x\mapsto u(g, x)\in \bt$ as
a unitary element of the $C^*$-algebra $\ga=C(X_0)$, which we will henceforth denote by $u_g$. In terms of
$u_g$   \eqref{cocycle1} rewrites as
\begin{equation}\label{cocycle2}
u_{gh}=\theta_g(u_h)u_g\,, \,\textrm{for all}\, g, h\,\in G\,.
\end{equation}
Note that if $u=(u_g)_{g\in G}$ and $v=(v_g)_{g\in G}$ are $1$-cocycles, then
$uv^*=(u_gv_g^*)_{g\in G}$ is still a $1$-cocycle.

For a given automorphism (homeomorphism) $\a$ of $\ga=C(X_0)$ we consider the universal crossed product
$\ga\rtimes_\a\bz$.  As of now, we will assume that $\a$ commutes with the action of $G$, that is
$\a\circ\theta_g=\theta_g\circ \a$, for all $g$ in $G$. Let $u=(u_g)_{g\in G}$ be a $1$-cocycle
w.r.t. the action $\theta$. For every $g$ in $G$, the universal property of the crossed product allows us to define a
unique automorphism $\Phi_g^u$ of $\ga\rtimes_\a\bz$, whose action on the generators is 
\begin{equation*}\label{Anzaidef}
\Phi_g^u(a):=\theta_g(a),\quad \textrm{for all}\,a\in\ga, \quad \Phi_g^u(V):=u_g V\, .
\end{equation*}
Furthermore, thanks to \eqref{cocycle2} one readily sees that $\Phi_{gh}^u=\Phi_g^u\circ \Phi_h^u$. 
We will denote the $C^*$-dynamical system
thus obtained by $(\ga\rtimes_\a\bz, G, \Phi^u)$.
Compounding $\om_0$ with the canonical conditional expectation, $E$, onto $\ga$ yields a distinguished $G$-invariant state $\om:=\om_0\circ E$. The $G$-invariance of $\om$ follows at once from the equality
$\om(aV^k)=\om_0(a)\delta_{k, 0}$, for all $a$ in $\ga$ and $k$ in $\bz$.\\
Finally, for every $g$ in $G$ we denote by $\widetilde{\Phi}^u_g$ the action of $\Phi^u_g$ at the level of von 
Neumann algebra $\pi_\om(\ga\rtimes_\a\bz)''$, namely $\widetilde{\Phi}^u_g(T)=V_g^\om T (V_g^\om)^* $, for all
$T$ in  $\pi_\om(\ga\rtimes_\a\bz)''$, where  $V_g^\om$ is the unitary implementator of $\Phi_g^u$ which was defined in
Equation \eqref{unitaryimpl}.
\begin{examp}
The general construction above applies in particular to the non-commutative (irrational) two torus $\mathbb{A}_\a$ (the universal $C^*$-algebra generated by two unitaries $U$ and $V$ subject to the commutation rule
$UV= e^{2\pi i\a}VU$) as soon as this is seen as the crossed product of $C(\bt)$ by the action of the homeomorphism $\a$ given by the rotation
$\a(z)=e^{2\pi i\a}z$, $z$ in $\bt$, where with a slight abuse of notation $\a$ denotes both the homeomorphism and the rotation angle ($\frac{\a}{2\pi}$ lies in $\br\setminus\bq$). \\
We take $G=\bz$, which means we need only define an automorphism acting on $\mathbb{A}_\a$.
The automorphism $\theta$ on $C(\bt)$ we start from is the one corresponding to the rotation $\theta(z)=e^{2\pi i\theta}z$, $z$ in $\bt$, where again $\frac{\theta}{2\pi}$ is assumed irrational. Therefore, the dynamical system $(C(\bt), \theta)$ is uniquely ergodic with its unique invariant measure given by the normalized Lebesgue measure $m$ on $\bt$. Now, since $\bz$ is a singly generated group, defining a $1$-cocycle on $\bz$ amounts to choosing a unitary $u$ in $C(\bt)$. Indeed, if we denote by $\Phi^u_1$ the automorphism on the non-commutative two torus
determined by $\Phi^u_1(U)=e^{2\pi i\theta} U$ and $\Phi^u_1(V)=uV$, then, for each integer $n$, $\Phi^u_n:= (\Phi^u_1)^n$ acts on $V$
\begin{equation*}
\Phi^u_n(V)=\left\{\begin{array}{ll}
                     V\,,\,\,\, n=0\,,&\\[1ex]
                     u\theta(u)\ldots \theta^{n-1}(u) V\,,   \,\,\,
                      n>0\,,& \\[1ex]
                   \theta^{n}(u^*)\theta^{n+1}(u^*)\ldots\theta^{-1}(u^*)V\,,  \,\,\,
                      n<0\, &
                    \end{array}
                    \right.
\end{equation*}
which allows us to identify the $1$-cocycle $(u_n)_{n\in\bz}$ on $\bz$ at once.
\end{examp}

\subsection{Classification in terms of cocycles}\label{classify}

The present section is devoted to the classification of our skew-product systems up to a natural notion of conjugacy. We actually consider two such notions depending on what we lay the emphasis on: topological versus measurable dynamics. We start by giving a couple of definitions.

\begin{defin}
A $1$-cocycle $u=(u_g)_{g\in G}$ is a continuous (or $L^\infty$) coboundary if there exists a continuous (or in $L^\infty(X_0,\mu_0)$)
function (known as $0$-cocycle) $w\colon X\rightarrow \bt$ such that
\begin{equation*}\label{coboundary}
u_g=\theta_g(w^*)w\,, \,\, \textrm{for all}\,\, g\in G\,.
\end{equation*}
Two $1$-cocycles $u=(u_g)_{g\in G}$ and $v=(v_g)_{g\in G}$ are $C^*$ (or $W^*$) cohomologous if the $1$-cocylcle
$uv^*=(u_gv_g^*)_{g\in G}$  is a continuous (or measurable) coboundary.
\end{defin}

\begin{defin}
We say that two  skew products  $(\ga\rtimes_\a\bz, G,  \Phi^u)$ and $(\ga\rtimes_\a\bz, G, \Phi^v)$  are 
$C^*$-conjugate if there exists a $*$-isomorphism $\Psi$ of $\ga\rtimes_\a\bz$ such that
$\Psi(a)=a$, for all $a$ in $\ga$, and $\Phi^u_g\circ\Psi=\Psi\circ \Phi^v_g$ for all $g$ in $G$.\\
Analogously,  $(\ga\rtimes_\a\bz, G,  \Phi^u)$ and $(\ga\rtimes_\a\bz, G, \Phi^v)$  are 
$W^*$-conjugate if there exists a  $*$-isomorphism $\Psi$ of $\pi_\om(\ga\rtimes_\a\bz)''$ such that
$\Psi(\pi_\om(a))=\pi_\om(a)$, for all $a$ in $\ga$, and $\widetilde{\Phi}^u_g\circ\Psi=\Psi\circ \widetilde{ \Phi}^v_g$ for all $g$ in $G$.

\end{defin}

We are now ready to state the result on the classification of our systems.

\begin{thm}\label{classification}
Two  skew products  $(\ga\rtimes_\a\bz, G,  \Phi^u)$ and $(\ga\rtimes_\a\bz, G, \Phi^v)$ 
are $C^*$ (or $W^*$) conjugate if and only if the corresponding $1$-cocycles $u=(u_g)_{g\in G}$, 
 $v=(v_g)_{g\in G}$ are  $C^*$ (or $W^*$) cohomologous.
\end{thm}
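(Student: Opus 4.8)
The plan is to prove the two equivalences ($C^*$ and $W^*$) in parallel, since the arguments are formally identical once one keeps track of whether the $0$-cocycle producing the coboundary is continuous or merely measurable. I would organize the proof around the following dictionary: a conjugating isomorphism $\Psi$ fixing $\ga$ pointwise must send $V$ to $wV$ for some unitary $w$ (in $\ga$ for the $C^*$-case, in $\pi_\om(\ga)''\cong L^\infty(X_0,\mu_0)$ for the $W^*$-case), and the intertwining relation $\Phi^u_g\circ\Psi=\Psi\circ\Phi^v_g$ applied to $V$ will translate precisely into the coboundary relation $u_gv_g^*=\theta_g(w^*)w$.

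First I would prove the ``if'' direction. Suppose $uv^*$ is a continuous coboundary, so $u_gv_g^*=\theta_g(w^*)w$ for a continuous $w\colon X_0\to\bt$. Since $w$ is a unitary in $\ga=C(X_0)$, the pair $(\id_\ga, wV)$ satisfies $(wV)a(wV)^*=w\a(a)w^*=\a(a)$ (using that $w\in\ga$ and $\a$ is inner-by-$V$ on $\ga$... more precisely $wVaV^*w^*=w\a(a)w^*$; but $w\in\ga$ and $\ga$ need not be central, so I should instead just note $wV$ is a unitary and $\ga\ni a\mapsto (wV)a(wV)^*$ is an automorphism of $\ga$, and check it equals $\a$ only if $w$ is central — \emph{here I must be careful}: since $\ga=C(X_0)$ is abelian this is automatic, $w\a(a)w^*=\a(a)$). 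By the universal property of $\ga\rtimes_\a\bz$ there is a unique endomorphism $\Psi$ with $\Psi|_\ga=\id$ and $\Psi(V)=wV$; running the same construction with $w^*$ (or rather with the inverse $0$-cocycle, using that $w^*$ again lands in $\bt$) produces a two-sided inverse, so $\Psi$ is a $*$-automorphism. Finally one computes on generators: $\Psi(\Phi^v_g(V))=\Psi(v_gV)=v_gwV$ while $\Phi^u_g(\Psi(V))=\Phi^u_g(wV)=\theta_g(w)u_gV$, and $v_gw=\theta_g(w)u_g$ is exactly $u_gv_g^*=\theta_g(w^*)w$ rewritten (using that $\ga$ is abelian so all these unitaries commute); on $\ga$ both sides give $\theta_g$. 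For the $W^*$-statement the same computation goes through verbatim with $w$ a unitary in $L^\infty(X_0,\mu_0)=\pi_\om(\ga)''$, using that $\pi_\om$ is faithful on $\ga$ and that the gauge action extends to the von Neumann algebra, so that $\pi_\om(\ga\rtimes_\a\bz)''$ is still generated by $\pi_\om(\ga)''$ and the implementing unitary $\pi_\om(V)$.

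For the ``only if'' direction, suppose $\Psi$ is a conjugacy. The key step is to show $\Psi(V)=wV$ for a suitable unitary $w$. Here I would use the Fourier expansion recalled in Section \ref{prel}: write $\Psi(V)=\sum_k V^k a_k$ with $a_k\in\ga$ (Ces\`aro-norm convergent). Applying the intertwining relation with $\Phi^v_g$ on $V$ and comparing Fourier coefficients, together with $\Psi|_\ga=\id$ and the faithfulness of the canonical conditional expectation $E$, should force all $a_k$ to vanish except $k=1$; alternatively, and perhaps more cleanly, observe that $\Psi$ must commute with the gauge action (since the gauge action is characterized intrinsically as the dual action, and $\Psi$ fixes $\ga=(\ga\rtimes_\a\bz)^{\text{gauge}}$ and intertwines the $G$-action which commutes with the gauge action) — hence $\Psi(V)$ lies in the spectral subspace for the first gauge weight, i.e. $\Psi(V)=wV$ with $w=E(\Psi(V)V^*)\in\ga$; it is a unitary because $\Psi$ is. Then $\Psi(V^*V)=1$ and $(wV)^*(wV)=1$ give unitarity, and the identity $v_gw=\theta_g(w)u_g$ obtained from the intertwining relation on $V$ rearranges to $u_gv_g^*=\theta_g(w^*)w$, so $uv^*$ is a continuous coboundary. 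In the $W^*$-case one argues identically inside $\pi_\om(\ga\rtimes_\a\bz)''$, now extracting $w\in\pi_\om(\ga)''=L^\infty(X_0,\mu_0)$, which yields a measurable coboundary.

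The main obstacle I anticipate is the ``only if'' direction's first step: showing that an arbitrary conjugating isomorphism, a priori only known to fix $\ga$ and intertwine the $G$-actions, must send $V$ to $wV$. The cleanest route is to argue that $\Psi$ necessarily commutes with the gauge automorphisms $\{\a_z\}$ and therefore preserves the spectral decomposition with respect to the gauge $\bt$-action; one must justify that commutation, presumably by noting that the gauge action is the one canonically associated with the $\bz$-grading $\{V^n\ga\}$ and that $\Psi$, fixing the degree-zero part $\ga$ and being an isomorphism, permutes the grading in a way compatible with the group structure of the grading, forcing $\Psi(V^n\ga)=V^n\ga$ for all $n$. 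In the $W^*$-setting one additionally has to make sure the gauge action passes to the bicommutant (it does, since $\om$ is gauge-invariant, so the gauge action is unitarily implemented in the GNS representation) and that $\pi_\om$ restricted to $\ga$ remains faithful, which holds because $\om_0$ is faithful on $\ga=C(X_0)$ as $\mu_0$ has full support.
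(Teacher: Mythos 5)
The forward (``if'') direction of your proposal matches the paper's construction, but the converse contains a genuine gap at precisely the step you yourself flag as the main obstacle: establishing $\Psi(V)=wV$. Your preferred justification --- that $\Psi$ must commute with the gauge action because it fixes the degree-zero part of the grading --- is not a formal consequence of $\Psi$ fixing $\ga$, and is false in general: take $\a=\mathrm{id}$, so that $\ga\rtimes_\a\bz\cong C(X_0\times\bt)$; any fibrewise homeomorphism $(x,z)\mapsto(x,\phi_x(z))$ gives an automorphism fixing $\ga$ pointwise which sends $V$ to a unitary not of the form $wV$ unless each $\phi_x$ is a rotation. What rules this out in the paper is not the grading but the fact that $\ga$ is \emph{maximal abelian} in $\ga\rtimes_\a\bz$ (Lemma \ref{MASA}, resting on minimality of $\a$ on an infinite space via Lemma \ref{minimal}), combined with the covariance relation: since $\Psi$ fixes $\ga$ one has $\Psi(V)a\Psi(V)^*=\a(a)=VaV^*$, so $V^*\Psi(V)$ lies in the relative commutant of $\ga$ and hence in $\ga$ (Lemma \ref{autoconj}). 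Your alternative Fourier route is also misdirected as stated: applying the intertwining relation to $\Psi(V)=\sum_k V^ka_k$ only yields $\a^{-k}(u_g^{(k)})\theta_g(a_k)=\a^{-k}(v_g)a_k$ for all $g$, which does not force $a_k=0$ for $k\neq 1$; the relation you must exploit is the covariance one, and even then the vanishing of the off-diagonal coefficients requires the minimality input above (in the $W^*$-case, its measurable analogue, Remark \ref{measurable}).

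A second, smaller gap is in the $W^*$ ``if'' direction. The universal property only produces a $*$-homomorphism of the $C^*$-algebra $\ga\rtimes_\a\bz$ into $\pi_\om(\ga\rtimes_\a\bz)''$; it does not extend ``verbatim'' to an automorphism of the von Neumann algebra, since normality is not automatic from generation by $\pi_\om(\ga)''$ and $\pi_\om(V)$. The paper handles this by checking that the homomorphism preserves the vector state associated with $\xi_\om$, so that it is spatially implemented by a unitary $U$ on $\ch_\om$, and ${\rm ad}(U)$ then restricts to the desired automorphism of the bicommutant. Finally, the paper first treats the case $v=1$ and reduces the general statement to it; your direct computation with general $u,v$ is a harmless stylistic difference once the two gaps above are repaired.
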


\begin{proof}
We start by dealing with the case when one of the two systems is trivial, say $v=(v_g)_{g\in G}=1$, namely
$v_g=1$ for all $g$ in $G$.
If $v=1$,  then $u=(u_g)_{g\in G}$ is a coboundary, that is there exists a unitary function
$w: X\rightarrow \bt$ satisfying $u_g=\theta_g(w^*)w$ for all $g$  in $G$.\\
Suppose first that $w$ is a continuous function.
By universality of the crossed product, the map $\Psi(a):=a$, $a$ in $\ga$, $\Psi(V):=wV$ extends to
an endomorphism of $\ga\rtimes_\a\bz$ which is seen at once to be an automorphism. We next show that the equality
$\Psi\circ\Phi_g^1= \Phi_g^u\circ\Psi$ holds for all $g$ in $G$.
To this end, it is enough to check the equality on $V$ (as it obviously holds for all $a$ in $\ga$), which is done below
\begin{align*}
\Psi\circ\Phi_g^1(V)=wV=\theta_g(w)u_gV=\Phi_g^u\circ\Psi(V)\, .
\end{align*}
Conversely if $(\ga\rtimes_\a\bz, G, \Phi^u)$ and $(\ga\rtimes_\a\bz, G, \Phi^1)$ are $C^*$-conjugate through
an automorphism $\Psi$ such that
$\Psi\circ\Phi_g^1= \Phi_g^u\circ\Psi$, then
$\Psi(V)=wV$ for some $w$ in $\ga$ thanks to Lemma \ref{autoconj}.
The same computation as above shows that imposing the intertwining relation $\Psi\circ\Phi_g^1= \Phi_g^u\circ\Psi$ on
$V$ implies that $w$ satisfies $u_g=\theta_g(w^*)w$ for all $g$  in $G$, hence $u$ is $C^*$-coboundary.\\
Suppose now that the the solution $w$ is only measurable, that is $w$ sits in  $\pi_{\om_0}(\ga)''\cong L^\infty(X, \mu_0)$
and satisfies $\widetilde{\th}_g(w)u=w$ for all $g$ in $G$. Our next aim is to show that there exists an automorphism $\Psi$ of the von Neumann algebra
$\pi_\om(\ga\rtimes_\a\bz)''$ such that 
$\Psi(\pi_\om(a))=\pi_\om(a)$, $a$ in $\ga$, and $\Psi(\pi_\om(V))=w\pi_\om(V)$. To begin with, by universality of the crossed product there does exist
an injective $*$-homomorphism $\Psi_o$ from $\pi_\om(\ga\rtimes_\a\bz)$ to $\pi_\om(\ga\rtimes_\a\bz)''$ such that
$\Psi_o(\pi_\om(a))=\pi_\om(a)$, $a$ in $\ga$, and $\Psi_o(\pi_\om(V))=w\pi_\om(V)$. Now denote by $\widetilde{\om}$ the vector state of $\cb(\ch_\om)$ associated
with $\xi_\om$. We next show that the equality $\widetilde{\om}\circ\Psi_o=\widetilde{\om}$ holds on $\pi_\om(\ga\rtimes_\a\bz)$. Indeed, for all
$a$ in $\ga$ and $k$ in $\bz$, we have
\begin{align*}
\widetilde{\om}\circ\Psi_o\, (\pi_\om(aV^k))=\widetilde{\om}(\pi_\om(a) (w\pi_\om(V))^k)=\delta_{k, 0}\widetilde{\om}(\pi_\om(a))=\widetilde{\om}(\pi_\om(aV^k))\,.
\end{align*}
This allows us to define an isometry $U_o$ on the dense subspace $\pi_\om(\ga\rtimes_\a\bz)\xi_\om\subset \ch_\om$ as
$$U_o \left(\sum_k \pi_\om(a_kV^k)\xi_\om\right):=\Psi_o\left(\sum_k \pi_\om(a_kV^k)\right)\xi_\om\,.$$
We denote by $U$ the extension of $U_o$ to the whole Hilbert space $\ch_\om$ and note that $U$ is actually
a unitary whose inverse can be obtained by performing the above construction again with $w$ replaced by
$w^*$.\\
By construction the restriction of ${\rm ad}(U)$ to $\pi_\om(\ga\rtimes_\a\bz)$ coincides with $\Psi_o$. Therefore,
the von Neumann algebra $\pi_\om(\ga\rtimes_\a\bz)''$ is invariant under ${\rm ad}(U)$ as well, and the restriction of
${\rm ad}(U)$ to  $\pi_\om(\ga\rtimes_\a\bz)''$ provides an injective homomorphism $\Psi$ of $\pi_\om(\ga\rtimes_\a\bz)''$
into itself, which is surjective  because $\Psi(w^*V)=V$. Finally, the intertwining relation
$\Psi\circ\widetilde{\Phi}_g^1=\widetilde{\Phi}_g^ u\circ\Psi$, for all $g$ in $G$,  can be verified exactly as above.\\
The implication  "$W^*$-conjugacy $\Rightarrow$  $u=(u_g)_{g\in G}$ is a $W^*$-coboundary" can be proved exactly as in the case of $C^*$-conjugacy by applying Lemma \ref{autoconj} ($W^*$-case).\\

The general case can be reconducted to the foregoing analysis in the following way. We now know that the $1$-cocycles $u=(u_g)_{g\in g}$ and
 $v=(v_g)_{g\in G}$ are  $C^*$ (or $W^*$) cohomologous if and only if
 $(\ga\rtimes_\a\bz, G,  \Phi^1)$ and $(\ga\rtimes_\a\bz, G, \Phi^{uv^*})$ are $C^*$ (or $W^*$) conjugate, that is if there exists an automorphism $\Psi$ of $\ga\rtimes_\a\bz$ (or $\pi_\om(\ga\rtimes_\a\bz)''$) such that
$\Psi\circ\Phi_g^1=\Phi_g^{uv^*}\circ\Psi$ (or $\Psi\circ\widetilde{\Phi}_g^1=\widetilde{\Phi}_g^{uv^*}\circ\Psi$) for all $g$ in $G$.
Now $\Psi(V)=wV$ (or $\Psi(\pi_\om(V))=w\pi_\om(V)$) with $w$ in $\ga$ (or $w$ in $\pi_{\om}(\ga)''$) satisfying $\theta_g(w)uv^*=w$ (or $\widetilde{\theta}_g(w)uv^*=w$) for all $g$ in $G$. The conclusion is got to by verifying that $\Psi$ also intertwines $\Phi_g^u$ and $\Phi_g^v$ for all $g$ in $G$, {\it i.e.} 
$\Psi\circ\Phi_g^v=\Phi_g^u\circ\Psi$, for all $g$ in $G$,  As usual, this equality need only be checked on $V$:
\begin{align*}
\Psi\circ\Phi_g^v\,(V)&=\Psi(v_gV)=v_gwV=v_g\theta_g(w)u_gv_g^*V\\
&=\theta_g(w)u_gV=\Phi_g^u\circ\Psi\, (V)\,,
\end{align*}
and we are done. The counterpart in the $W^*$ setting is entirely analogous.
\end{proof}

\subsection{Weak ergodicity and ergodicity w.r.t. the fixed-point subalgebra}\label{dichotomy}
This section addresses the study of the main ergodic properties a skew-product system may enjoy. In particular, 
weak ergodicity, unique ergodicity and unique ergodicity w.r.t the fixed-point subalgebra can all
be characterized in terms of coboundaries.\\ To state our results, we first need to recall that
given a $1$-cocycle $u=(u_g)_{g\in G}$ of $G$, it is possible to define a family of $1$-cocycles  as
\begin{equation*}\label{cocycleVn}
u_g^{(n)}=\left\{\begin{array}{ll}
                     I\,,\,\,\, n=0\,,&\\[1ex]
                     u\a(u_g)\ldots \a^{n-1}(u_g)\,,   \,\,\,
                      n>0\,,& \\[1ex]
                   \a^{n}(u_g^*)\a^{n+1}(u_g^*)\ldots\a^{-1}(u_g^*)\,,  \,\,\,
                      n<0\,.&
                    \end{array}
                    \right.
\end{equation*}
Note that, for any fixed $g$ in $G$, $(u_g^{(n)})_{n\in\bz}$ is a $\bz$ $1$-cocycle w.r.t the action of $\a$. Furthermore, by a straightforward induction on
$n$ we see that
\begin{equation*}\label{actionVn}
\Phi_g^u(V^n)= u_g^{(n)}V^n\,, g\in G, n\in\bz\, .
\end{equation*}
As we show below, for weak ergodicity to occur it is necessary and sufficient that the so-called cohomological equations at the $C^*$-algebra level do not have non-trivial solutions.
\begin{prop}\label{weakergo}
The $C^*$-dynamical system  $(\ga\rtimes_\a\bz, G,  \Phi^u)$ is weakly ergodic if and only the $1$-cocycle $(u_g^{(n)})_{g\in G}$ is not a continuous coboundary for every
$n$ in $\bz$.\\
In other words,  $(\ga\rtimes_\a\bz, G,  \Phi^u)$ is weakly ergodic if and only if for every $n\in\bz$ with $n\neq 0$ the equation in the unknown
$a$ in $\ga$ 
$$\theta_g(a)u_g^{(n)}=a\, ,\,\,{\textrm for\,\, all}\, g\in G$$
has only the trivial solution $a=0$.
\end{prop}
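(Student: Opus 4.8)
The plan is to characterize weak ergodicity — i.e.\ triviality of the fixed-point subalgebra $(\ga\rtimes_\a\bz)^G$ — by expanding a fixed element into its Fourier series and analyzing each homogeneous component separately. First I would recall that an element $x\in\ga\rtimes_\a\bz$ is $G$-invariant precisely when $\Phi_g^u(x)=x$ for all $g$, and that $x$ admits the Fourier expansion $x=\sum_{n\in\bz}E(V^{-n}x)V^n=\sum_{n\in\bz}a_nV^n$ with $a_n\in\ga$, the series converging in the Ces\`aro sense. Since the gauge action commutes with each $\Phi_g^u$ (both fix $\ga$ pointwise and scale $V$), the canonical conditional expectation $E$ intertwines $\Phi_g^u$ with itself on each gauge-spectral subspace; hence $x$ is $G$-invariant if and only if every Fourier coefficient satisfies $\Phi_g^u(a_nV^n)=a_nV^n$ for all $g\in G$. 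Using $\Phi_g^u(a_nV^n)=\theta_g(a_n)u_g^{(n)}V^n$, this says exactly $\theta_g(a_n)u_g^{(n)}=a_n$ for all $g\in G$ and all $n$.

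Next I would treat the $n=0$ component: $\theta_g(a_0)=a_0$ for all $g$ means $a_0\in\ga^G$, and by unique ergodicity of $(\ga,G,\theta)$ (amenability of $G$ giving triviality of the fixed-point algebra, as noted in the preliminaries) we get $a_0\in\bc 1$. So the scalar part is always present and harmless. For $n\neq 0$, the equation $\theta_g(a_n)u_g^{(n)}=a_n$ is precisely the cohomological equation in the statement; rearranged, it reads $u_g^{(n)}=\theta_g(a_n^*)a_n$ when $a_n$ is a nonzero solution, and I would need to upgrade a nonzero solution to a \emph{unitary} one so that it witnesses $(u_g^{(n)})_{g\in G}$ being a continuous coboundary in the precise sense of the Definition. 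The standard trick: if $a_n\neq 0$ solves $\theta_g(a_n)u_g^{(n)}=a_n$, then $\theta_g(a_na_n^*)=a_nu_g^{(n)}(u_g^{(n)})^*a_n^*=a_na_n^*$, so $|a_n^*|:=(a_na_n^*)^{1/2}\in\ga^G=\bc 1$, forcing $a_na_n^*$ to be a nonzero scalar; rescaling, $w:=\lambda a_n$ can be taken with $ww^*=1$, and a symmetric computation using $u_g^{(n)}(u_g^{(n)})^*=1$ gives $w^*w=1$ as well, so $w$ is a unitary in $C(X_0)$, i.e.\ a continuous $0$-cocycle $w\colon X_0\to\bt$, and $u_g^{(n)}=\theta_g(w^*)w$. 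Conversely, if some $u_g^{(n)}=\theta_g(w^*)w$ is a continuous coboundary with $n\neq 0$, then $w^*V^n$ (equivalently $wV^n$, up to adjoint) is a nonzero $G$-invariant element, so the system is not weakly ergodic.

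Assembling: the system is weakly ergodic $\iff$ the only $G$-invariant elements are scalars $\iff$ for every $n\neq 0$ the cohomological equation $\theta_g(a)u_g^{(n)}=a$ has only $a=0$ $\iff$ (by the unitarization argument) no $(u_g^{(n)})_{g\in G}$ with $n\neq 0$ is a continuous coboundary; and since $(u_g^{(0)})_{g\in G}$ is the trivial cocycle, which \emph{is} a coboundary (via $w=1$), the clause "for every $n\in\bz$" in the first displayed equivalence should be read as excluding $n=0$, matching exactly the "In other words" reformulation. I expect the main obstacle to be the careful justification that a merely \emph{nonzero} solution of the cohomological equation can be normalized to a genuine unitary $0$-cocycle — this is where unique ergodicity of the base system (triviality of $\ga^G=C(X_0)^G$) is used essentially, and one must be a little careful that $a_na_n^*$ and $a_n^*a_n$ are both scalars and nonzero before rescaling; everything else is bookkeeping with the Fourier expansion and the identity $\Phi_g^u(V^n)=u_g^{(n)}V^n$.
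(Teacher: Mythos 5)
Your proposal is correct and follows essentially the same route as the paper: Fourier expansion of a fixed element, reduction to the homogeneous components $aV^n$, the cohomological equation $\theta_g(a)u_g^{(n)}=a$, and unitarization of a nonzero solution via unique ergodicity of the base (the paper uses $w=|a|/\|a\|$ where you use $\theta_g(a_na_n^*)=a_na_n^*$, which is the same computation). Your explicit justification that gauge-equivariance of $E$ reduces invariance of $x$ to invariance of each Fourier coefficient, and your remark that the $n=0$ case must be excluded from the first formulation, are points the paper leaves implicit but are entirely consistent with its argument.
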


\begin{proof}
Recall that any $x$ in the crossed product $\ga\rtimes_\a\bz$ can be expanded as
$x=\sum_n \a^{-n} (E(V^{-n}x))V^n$,  where the convergence of the series is understood in the norm topology in the Cesàro sense.
This allows us to study the fixed-point equation $\Phi_g^u(x)=x$, $g$ in $G$, for $x$ of the form $aV^n$, for some $a$ in $\ga$ and
$n$ in $\bz$. For such an $x$, the fixed-point equation $\Phi_g^u(aV^n)=aV^n$, $g$ in $G$, becomes $\theta_g(a)u_g^{(n)}V^n=aV^n$, t
 which is satisfied if and only if $\theta_g(a)u_g^{(n)}=a$.\\
Now if $a\in\ga$ is a non-trivial solution of $\theta_g(a)u_g^{(n)}=a$, for all $g\in G$, then $w:=\frac{|a|}{\|a\|}$ 
 satisfies $\theta_g(w)=w$ for all $g$ in $G$, hence $w=1$ by ergodicity. This shows that if  $\theta_g(a)u_g^{(n)}=a$ has non-trivial
solution, then there is always a unitary solution, {\it i.e.} $(u_g^{(n)})$ is a continuous coboundary.\\
Finally, the conclusion is arrived at
by noting that for $n=0$ the equation is $\theta_g(a)=a$ for all $g$ in $G$, which is satisfied only by multiples of the identity again thanks to
ergodicity of $(\ga, G, \theta)$. 
\end{proof}
In the sequel, the equations $\theta_g(a)u_g^{(n)}=a$ will be referred to as the $C^*$-cohomological equations and their solutions will be called 
continuous solutions. \\

Define $\bz_u:=\{n\in\bz: (u_g^{(n)})\,\,\textrm{is a continuous coboundary}\}$
\begin{prop}\label{Zsubgroup}
$\bz_u$ is a subgroup of $\bz$, hence $\bz_u=m_o\bz$ for some (possibly zero) natural $m_0$. \\
Moreover, for every $n$ in $\bz_u$ the continuous $0$-cocycle that determines the coboundary $(u_g^{(n)})_{g\in G}$ is unique up to a phase.
\end{prop}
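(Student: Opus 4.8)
The plan is to prove the two assertions separately, both by elementary manipulations with the cocycle relation \eqref{cocycle2} for the derived cocycles $(u_g^{(n)})_{g\in G}$, together with the unique ergodicity of $(\ga, G, \theta)$.

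For the subgroup claim I would first observe that $0 \in \bz_u$ trivially, since $(u_g^{(0)})_{g\in G} = (I)_{g\in G} = \theta_g(1^*)1$. Next I would show closure under negation: if $n \in \bz_u$, say $u_g^{(n)} = \theta_g(w^*)w$ for a continuous $w\colon X_0 \to \bt$, then one checks directly from the definition of $u_g^{(-n)}$ in terms of $u_g^{(n)}$ (for a $\bz$-cocycle with respect to $\a$ one has the relation $u_g^{(-n)} = \a^{-n}((u_g^{(n)})^*)$, which also follows from the $\bz$-cocycle identity $u_g^{(m+n)} = u_g^{(m)}\a^m(u_g^{(n)})$ applied with $m = -n$) that $u_g^{(-n)} = \a^{-n}(\theta_g(w)w^*) = \theta_g(\a^{-n}(w))\a^{-n}(w^*)$, using that $\a$ commutes with $\theta_g$. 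Hence $\a^{-n}(w)$ is a continuous $0$-cocycle witnessing $-n \in \bz_u$. Finally, closure under addition: if $m, n \in \bz_u$ with witnesses $w_1, w_2$, then from $u_g^{(m+n)} = u_g^{(m)}\a^m(u_g^{(n)})$ I get $u_g^{(m+n)} = \theta_g(w_1^*)w_1\,\a^m(\theta_g(w_2^*)w_2) = \theta_g(w_1^*\a^m(w_2^*))\,w_1\a^m(w_2)$ (again using commutativity of $\a$ with $\theta_g$ and commutativity of $\ga = C(X_0)$), so $w_1\a^m(w_2)$ is the desired $0$-cocycle for $m+n$. Thus $\bz_u$ is a subgroup of $\bz$, and every subgroup of $\bz$ is of the form $m_0\bz$ for a unique $m_0 \in \bn \cup \{0\}$.

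For the uniqueness-up-to-a-phase claim, suppose $w_1$ and $w_2$ are both continuous $0$-cocycles with $u_g^{(n)} = \theta_g(w_1^*)w_1 = \theta_g(w_2^*)w_2$ for all $g$ in $G$. Then $\theta_g(w_1^*)w_1 = \theta_g(w_2^*)w_2$, which rearranges (all factors being commuting unitaries in $C(X_0)$) to $\theta_g(w_1^* w_2) = w_1^* w_2$ for all $g$ in $G$. So the continuous function $w_1^* w_2$ is $G$-invariant, i.e.\ it lies in $\ga^G$. By unique ergodicity of $(\ga, G, \theta)$ — whose fixed-point subalgebra is trivial, as recalled after Proposition \ref{ADgen} (amenability of $G$ plus unique ergodicity forces $\ga^G = \bc 1$) — we conclude $w_1^* w_2 = \lambda 1$ for some scalar $\lambda$, necessarily of modulus one since both $w_1$ and $w_2$ are $\bt$-valued. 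Hence $w_2 = \lambda w_1$, which is the assertion.

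I do not expect any serious obstacle here; the only point that needs a little care is making the derived-cocycle identities ($u_g^{(m+n)} = u_g^{(m)}\a^m(u_g^{(n)})$ and $u_g^{(-n)} = \a^{-n}((u_g^{(n)})^*)$) explicit and checking them by induction on $n$ from the defining cases, after which everything reduces to bookkeeping with commuting unitaries and a single appeal to triviality of $\ga^G$.
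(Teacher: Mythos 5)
Your argument is correct and follows essentially the same route as the paper, which simply cites \cite[Proposition 10.2]{DFR1} for the subgroup property (the cocycle identity $u_g^{(m+n)}=u_g^{(m)}\a^m(u_g^{(n)})$ plus commutativity of $\ga$ and of $\a$ with $\theta_g$) and invokes unique ergodicity of $(\ga,G,\theta)$, i.e.\ triviality of $\ga^G$, for uniqueness up to a phase, exactly as you do. The only cosmetic slip is in the negation step: since $u_g^{(-n)}=\a^{-n}\bigl((u_g^{(n)})^*\bigr)=\theta_g(\a^{-n}(w))\,\a^{-n}(w^*)$, the witnessing $0$-cocycle is $\a^{-n}(w^*)=\a^{-n}(w)^*$ rather than $\a^{-n}(w)$, which of course does not affect the conclusion that $-n$ lies in $\bz_u$.
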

\begin{proof}
The group property follows exactly as in \cite[Proposition 10.2]{DFR1}. The second part of the statement amounts to showing
that the set of all solutions of  $\theta_g(a)u_g^{(n)}=a$, $g$ in $G$, is of the form $\{\lambda w_n: \lambda\in\bc\}$,
where $w_n$ is a unitary solution of the equation, and this follows again by unique ergodicity of $(\ga, G, \theta)$.
\end{proof}
At this point, we are in a position to prove that the fixed-point subalgebra is always commutative irrespective of the cocycle $(u_g)_{g\in G}$. More precisely, the fixed-point subalgebra is either trivial or isomorphic with $C(\bt)$.
\begin{prop}\label{dicho}
If the $C^*$-dynamical system  $(\ga\rtimes_\a\bz, G,  \Phi^u)$ is not weakly ergodic, then the fixed-point subalgebra
$(\ga\rtimes_\a\bz)^G$ is isomorphic with $C(\bt)$.
\end{prop}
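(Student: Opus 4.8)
The plan is to pin down the fixed-point subalgebra exactly and then recognise it as generated by a single unitary of full spectrum. Since the system is not weakly ergodic, Proposition~\ref{weakergo} provides some $n\neq 0$ for which $(u_g^{(n)})_{g\in G}$ is a continuous coboundary; hence by Proposition~\ref{Zsubgroup} we have $\bz_u=m_0\bz$ with $m_0\geq 1$, and for each $n\in\bz_u$ we fix a $\bt$-valued continuous solution $w_n$ of the $C^*$-cohomological equation $\theta_g(w_n)u_g^{(n)}=w_n$ ($g\in G$), unique up to a phase by Proposition~\ref{Zsubgroup}.

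First I would prove that
$$(\ga\rtimes_\a\bz)^G=\overline{{\rm span}}\{w_{km_0}V^{km_0}\colon k\in\bz\}\,.$$
Because $\Phi^u_g$ fixes $\ga$ pointwise and only rescales $V$, it commutes with the gauge action $\{\a_z\}$, hence with the canonical conditional expectation $E$ and, more generally, with each projection onto a gauge-homogeneous component $x_n:=\int_\bt z^{-n}\a_z(x)\,{\rm d}m(z)\in\ga V^n$. Using the Cesàro-convergent Fourier expansion $x=\sum_n x_n$, an element $x=\sum_n a_nV^n$ is $\Phi^u$-fixed if and only if each $x_n=a_nV^n$ is, that is, if and only if $\theta_g(a_n)u_g^{(n)}=a_n$ for all $g$. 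By Proposition~\ref{weakergo} (whose proof shows a nonzero solution has $\theta$-invariant, hence constant, modulus and so can be normalised to a unitary, forcing $n\in\bz_u$) together with Proposition~\ref{Zsubgroup} (the solution space is then $\bc w_n$), this forces $a_n=0$ for $n\notin\bz_u$ and $a_n\in\bc w_n$ for $n\in\bz_u=m_0\bz$, which yields the displayed identity once one notes that the fixed-point subalgebra is norm-closed and contains each $w_{km_0}V^{km_0}$.

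Next I would set $W:=w_{m_0}V^{m_0}$, a unitary in $\ga\rtimes_\a\bz$ fixed by every $\Phi^u_g$. For each $k\in\bz$ the power $W^k$ is $\Phi^u$-fixed and gauge-homogeneous of degree $km_0\in\bz_u$, so by the uniqueness up to a phase it is a modulus-one scalar multiple of $w_{km_0}V^{km_0}$; hence $w_{km_0}V^{km_0}\in C^*(W)$ for every $k$, and combined with the previous step this gives $(\ga\rtimes_\a\bz)^G=C^*(W)$. Being generated by a single unitary, $C^*(W)$ is abelian and, via the continuous functional calculus and spectral permanence, isometrically $*$-isomorphic with $C(\spec W)$. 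Finally, from $\a_z(W)=\a_z(w_{m_0})\a_z(V)^{m_0}=z^{m_0}W$ and the invariance of the spectrum under automorphisms one gets $z^{m_0}\spec W=\spec W$ for every $z\in\bt$; since $z\mapsto z^{m_0}$ is onto $\bt$, the nonempty closed set $\spec W\subseteq\bt$ is invariant under all rotations, whence $\spec W=\bt$ and $(\ga\rtimes_\a\bz)^G\cong C(\bt)$.

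The only step needing genuine care is the reduction to gauge-homogeneous components: one must justify that a $\Phi^u$-fixed element is the Cesàro limit of the (automatically fixed) partial sums of its Fourier expansion, which rests on the norm convergence in the Cesàro sense of Fourier expansions in a crossed product together with the commutation of $\Phi^u$ with the gauge action, both already available. Everything else is routine bookkeeping with the one-dimensional solution spaces of the cohomological equations.
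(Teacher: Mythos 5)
Your argument is correct, and its first half coincides with the paper's: both reduce to showing that $(\ga\rtimes_\a\bz)^G=C^*(w_{m_0}V^{m_0})$, which you establish in more detail than the paper does (the paper simply asserts this identification as a consequence of Propositions \ref{weakergo} and \ref{Zsubgroup}, whereas you spell out the reduction to gauge-homogeneous components via the Ces\`{a}ro--Fourier expansion and the commutation of $\Phi^u$ with the gauge action). Where you genuinely diverge is the final step, namely why $C^*(w_{m_0}V^{m_0})\cong C(\bt)$. The paper invokes Proposition \ref{injectivity} to identify $C^*(\ga,V^{m_0})$ with $\ga\rtimes_{\a^{m_0}}\bz$, uses universality to build an automorphism $\Psi$ of this subalgebra sending $V^{m_0}$ to $w_{m_0}V^{m_0}$, and restricts $\Psi$ to $C^*(V^{m_0})\cong C(\bt)$; this transports the known fact that $V$ (hence $V^{m_0}$) has full spectrum. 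You instead compute $\spec(w_{m_0}V^{m_0})$ directly: since the gauge automorphisms fix $\ga$ and rescale $V$, one has $\a_z(w_{m_0}V^{m_0})=z^{m_0}w_{m_0}V^{m_0}$, so the spectrum is invariant under all rotations and must be all of $\bt$. Your route is more self-contained for this step (it does not need Proposition \ref{injectivity} or the universal property), at the cost of being specific to the single unitary $W$; the paper's automorphism $\Psi$ gives slightly more, namely an isomorphism of the whole subalgebra $C^*(\ga,V^{m_0})$ intertwining the relevant structures, which is in the spirit of the conjugacy results of Section \ref{classify}. Both are complete proofs.
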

\begin{proof}
By Propositions \eqref{weakergo} and \eqref{Zsubgroup}, the fixed-point subalgebra coincides with the subalgebra generated by the unitary
$w_{m_0}V^{m_0}$, where $w_{m_0}$ is the unique (up to phase) unitary such that  $\theta_g(w_{m_0})u_g^{(m_0)}=w_{m_0}$.
To this aim, take the $C^*$-subalgebra of $\ga\rtimes_\a\bz$ generated by $\ga$ and $V^{m_0}$. By
Proposition \ref{injectivity} this is isomorphic with the universal crossed product $\ga\rtimes_{\a_{m_0}}\bz$. The universal property
of the crossed product allows us to consider a $*$-endomorphism $\Psi$ of $C^*(a, V^{m_0})$   uniquely
determined by $\Psi(a)=a$, for all $a$ in $\ga$, and $\Psi(V^{m_0})=w_{m_0}V^{m_0}$. Now $\Psi$ is a $*$-isomorphism with inverse
$\Psi^{-1}(a)=a$, for all $a$ in $\ga$, and $\Psi^{-1}(V^{m_0})=w_{m_0}^*V^{m_0}$. The conclusion is now reached by observing that 
the restriction of $\Psi$ to $C^*(V^{m_0})\cong C(\bt)$ realizes a $*$-isomorphism between  $C^*(V^{m_0})$ and $C^*(w_{m_0}V^{m_0})$.
\end{proof}

As we did in Section \ref{generg}, we denote by $V_g^\om$  the unitary that implements $\Phi_g^u$ on the GNS representation of the crossed product
$\ga\rtimes_\a\bz$ of the state $\om$.\\ 
The Hilbert space $\ch_\om$ of the GNS representation of our distinguished state $\om$ was shown in \cite[Lemma 8.3]{DFR1} to decompose into a direct sum of the type  $\mathcal{H}_\omega=\oplus_{n\in\mathbb{Z}} \mathcal{H}_n$,
with $\ch_0:=[\pi_\om(\ga)\xi_\om]\cong\ch_{\om_o}$ and $\ch_n:=\pi_\om(V)^n[\pi_\om(\ga)\xi_\om]$ for all $n\neq 0$.
Finally we denote by $V_g^{\om_0}$ the unitary on $\ch_{\om_0}$ implementing $\theta_g$ for all $g$ in $G$. Note that
$V_g^{\om_0}$ is unitarily equivalent to the restriction of $V_g^{\om}$ to $\ch_0$.\\
The above decomposition enables us to study the fixed-point equation $V_g^{\om}x=x$, $x$ in $\ch_\om$, at the Hilbert space level. 
Any scalar multiple of the GNS vector $\xi_\om$ is obviously fixed by $V_g^\om$ for all $g$ in $G$. We shall refer to multiples of $\xi_\om$ as the trivial
fixed vectors of $V_g^\om$. 
\begin{prop}
The unitary group $\{V_g^\om: g\in G\}$ has non-trivial fixed vectors in $\ch_\om$ if and only 
if for some $n\neq 0$ the cocycle $(u_g^{(n)})_{g\in G}$ is a measurable coboundary. \\
\end{prop}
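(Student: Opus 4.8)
The plan is to exploit the direct-sum decomposition $\ch_\om=\bigoplus_{n\in\bz}\ch_n$ recalled just above. Since $V_g^\om\pi_\om(aV^n)\xi_\om=\pi_\om(\theta_g(a)u_g^{(n)}V^n)\xi_\om\in\ch_n$, each $\ch_n$ is invariant under every $V_g^\om$, so a vector $x=\sum_n x_n$ (with $x_n\in\ch_n$) is fixed by the whole group $\{V_g^\om:g\in G\}$ if and only if each component $x_n$ is. On $\ch_0$ the operator $V_g^\om$ is unitarily equivalent to $V_g^{\om_0}$, and unique ergodicity of $(\ga,G,\theta)$, that is ergodicity of $\mu_0$, forces the $V_g^{\om_0}$-fixed vectors in $\ch_{\om_0}$ to be exactly the scalar multiples of $\xi_{\om_0}$, the standard Hilbert-space form of ergodicity. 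Hence a non-trivial fixed vector exists iff, for some $n\neq0$, the unitaries $V_g^\om|_{\ch_n}$ admit a common non-zero fixed vector, and the whole statement reduces to a single fixed $n\neq0$.

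For that I would transport everything to $L^2(X_0,\mu_0)$. The unitary $\ch_n\to L^2(X_0,\mu_0)$ sending $\pi_\om(bV^n)\xi_\om$ to the class of $b$ turns the $\pi_\om(\ga)''$-action on $\ch_n$ into multiplication by $L^\infty(X_0,\mu_0)$ and, reading off the formula above, turns $V_g^\om|_{\ch_n}$ into the unitary $\psi\mapsto u_g^{(n)}\,\theta_g(\psi)$ on $L^2(X_0,\mu_0)$ (multiplication by $u_g^{(n)}$ composed with $V_g^{\om_0}$). Therefore $\ch_n$ contains a non-zero $\{V_g^\om\}$-fixed vector iff there is $0\neq\psi\in L^2(X_0,\mu_0)$ solving $\theta_g(\psi)\,u_g^{(n)}=\psi$ $\mu_0$-a.e.\ for every $g\in G$.

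It then remains to upgrade such an $L^2$-solution to an honest unitary $0$-cochain. Taking moduli and using $|u_g^{(n)}|=1$ and $|\theta_g(\psi)|=\theta_g(|\psi|)$ (here $\ga$ is commutative, so $\theta_g$ comes from a point transformation) gives $\theta_g(|\psi|)=|\psi|$ for all $g$; thus $|\psi|\in L^2(X_0,\mu_0)$ is $G$-invariant, hence a.e.\ equal to a positive constant $c$ by ergodicity of $\mu_0$, and $w:=\psi/c\in L^\infty(X_0,\mu_0)$ is unitary with $\theta_g(w)\,u_g^{(n)}=w$, that is $u_g^{(n)}=\theta_g(w^*)w$. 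So $(u_g^{(n)})_{g\in G}$ is a measurable coboundary. Conversely, if $(u_g^{(n)})_{g\in G}$ is a measurable coboundary witnessed by a unitary $w\in L^\infty(X_0,\mu_0)$ with $\theta_g(w)u_g^{(n)}=w$, then the vector of $\ch_n$ corresponding to $w\in L^2(X_0,\mu_0)$ is non-zero, lies in $\ch_n$ with $n\neq0$ (so it is not a multiple of $\xi_\om$), and is fixed by every $V_g^\om$; combining this with the reduction of the first paragraph gives the equivalence.

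I expect the substantive difficulty to be purely bookkeeping: making precise the identification $\ch_n\cong L^2(X_0,\mu_0)$, extending $\theta_g$ and multiplication by $u_g^{(n)}$ from $C(X_0)$ to $L^\infty$ and $L^2$, and the density/continuity step letting one read the action of $V_g^\om$ on $\ch_n$ off its action on $\pi_\om(\ga V^n)\xi_\om$. The one non-formal input is the implication ``unique ergodicity of $(\ga,G,\theta)$ $\Rightarrow$ the $G$-invariant vectors of $\ch_{\om_0}$ are the multiples of $\xi_{\om_0}$'', which is classical and, if desired, can also be extracted from Proposition \ref{vonNeumannG}.
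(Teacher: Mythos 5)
Your proof is correct and follows essentially the same route as the paper: decompose $\ch_\om=\bigoplus_n\ch_n$, observe each summand is $V_g^\om$-invariant, and reduce the existence of a non-trivial fixed vector to a non-zero $L^2(X_0,\mu_0)$-solution of the cohomological equation at some level $n\neq 0$ (the paper does this by adapting \cite[Lemma 8.5]{DFR1} and works with the convention $\ch_n=\pi_\om(V)^n[\pi_\om(\ga)\xi_\om]$, which produces the harmless $\a^{-n}$-twist it then removes by the substitution $\eta'=\eta\circ\a^n$). Your explicit modulus-plus-ergodicity step upgrading the $L^2$-solution to a unitary $0$-cochain is a welcome addition, since the paper leaves that passage from ``non-zero $L^2$ solution'' to ``measurable coboundary'' implicit.
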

\begin{proof}
All we need to show is that there are non-trivial solutions  $x\in\ch_\om$ of $V_g^\om x=x$, for all $g$ in $G$, if and only if there exists 
$n\neq 0$ such that the equation
\begin{equation}\label{ncoboundary}
\eta(\theta_g(x))u_g^{(n)}(x)=\eta(x)\,, \mu_0\,\, a.e.\,,\,\,g\in G
\end{equation}
has a non-zero solution $\eta$ in $\ch_{\om_0}=L^2(X, \mu_0)$.\\
A straightforward adaptation of the proof of  \cite[8.5]{DFR1} shows that there exist non-trivial $x$ in $\ch_\om$ with $V_g^\om x =x$, for all $g$ in $G$, if and only if 
\begin{equation}\label{comH}
\pi_{\om_0}(\a^{-n}(u_g^{(n)}))V_g^{\om_0}\eta=\eta\,,\,\,g\in G
\end{equation}
has non-zero solutions $\eta$ in $\ch_{\om_0}=L^2(X_0, \mu_0)$ for some $n\neq 0$. But with $\ga=C(X_0)$ and $\om_0$ given by
$\om_0(f)=\int_{X_0} f{\rm d}\mu_0$, $f$ in $C(X_0)$, Equation \eqref{comH} reads as
\begin{equation}\label{comL2}
u_g^{(n)}(\a^{-n}(x))\eta(\theta_g(x))=\eta(x)\,,\,\, \mu_0\, a.e.\,,\, \textrm{for all} \,g\,\,\in G
\end{equation}
 The conclusion is reached by observing that if $\eta$ in $L^2(X_0, \mu_0)$ is a non-trivial solution of
\eqref{comL2}, then $\eta'(x)=\eta(\a^n(x))$, $\mu_0$ a.e, satisfies \eqref{ncoboundary}.
\end{proof}
Here follows the characterization of unique ergodicity w.r.t. the fixed-point subalgebra.
\begin{thm}\label{uniquelyerg}
A necessary and sufficient condition for $(\ga\rtimes_\a\bz, G, \Phi^u)$ to be uniquely ergodic w.r.t. the
fixed-point subalgebra is that for every $n$ in $\bz$ the $1$-cocycle $(u_g^{(n)})_{g\in G}$ is not a measurable non-continuous coboundary.
\end{thm}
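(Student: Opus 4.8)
The plan is to route everything through Proposition \ref{ADgen}: $(\ga\rtimes_\a\bz, G,\Phi^u)$ is uniquely ergodic with respect to the fixed-point subalgebra exactly when, for a F{\o}lner sequence $\{F_m\}_{m\in\bn}$, the Ces\`{a}ro averages $M_m(x):=\frac1{\mu_G(F_m)}\int_{F_m}\Phi_g^u(x)\,{\rm d}\mu_G(g)$ converge in norm for every $x\in\ga\rtimes_\a\bz$. Since $\|M_m\|\le1$ and ${\rm span}\{aV^n:a\in\ga,\,n\in\bz\}$ is norm-dense, the $\eps/3$-argument from the proof of Proposition \ref{ADgen} reduces the problem to the convergence of $M_m(aV^n)$ for every $a\in\ga$ and $n\in\bz$. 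From $\Phi_g^u(aV^n)=\theta_g(a)u_g^{(n)}V^n$ we get $M_m(aV^n)=b_m^{(n)}(a)\,V^n$, where $b_m^{(n)}(a):=\frac1{\mu_G(F_m)}\int_{F_m}\theta_g(a)u_g^{(n)}\,{\rm d}\mu_G(g)\in\ga$; as $V^n$ is unitary, the task is to decide for which $n$ the sequence $\big(b_m^{(n)}(a)\big)_m$ converges in $\ga$ for \emph{every} $a$. For $n=0$ this is automatic, $b_m^{(0)}(a)\to\om_0(a)1$ in norm by unique ergodicity of $(\ga,G,\theta)$. For $n\ne0$ I would distinguish the three cases for $(u_g^{(n)})_{g\in G}$: (a) a continuous coboundary; (b) not a measurable coboundary; (c) a measurable but not a continuous coboundary; and prove that $\big(b_m^{(n)}(a)\big)_m$ converges in $\ga$ for all $a$ in cases (a) and (b) and fails to do so in case (c).

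Case (a) is immediate: writing $u_g^{(n)}=\theta_g(w^*)w$ with $w\in\ga$ a unitary gives $\theta_g(a)u_g^{(n)}=\theta_g(aw^*)w$, hence $b_m^{(n)}(a)=\big(\frac1{\mu_G(F_m)}\int_{F_m}\theta_g(aw^*)\,{\rm d}\mu_G(g)\big)w\to\om_0(aw^*)\,w$ in norm. For case (c), fix a measurable unitary $0$-cocycle $w\in L^\infty(X_0,\mu_0)$ with $u_g^{(n)}=\theta_g(w^*)w$ $\mu_0$-a.e.; the same manipulation, read in $\ch_{\om_0}=L^2(X_0,\mu_0)$ and combined with the mean ergodic theorem for $(\ga,G,\theta)$, gives $b_m^{(n)}(a)\to\om_0(aw^*)\,w$ in $L^2(\mu_0)$. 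As $\ga$ is dense in $L^2(\mu_0)$ and $w\ne0$, I can choose $a\in\ga$ with $\om_0(aw^*)\ne0$; were $b_m^{(n)}(a)$ to converge in the norm of $\ga$, its limit --- a continuous function, $\ga$ being closed in $\ga\rtimes_\a\bz$ --- would equal the $L^2$-limit $\om_0(aw^*)w$, so that $w$ would agree $\mu_0$-a.e.\ with a continuous function. Since $\mu_0$ is fully supported, this would exhibit $(u_g^{(n)})_{g\in G}$ as a \emph{continuous} coboundary, contradicting (c). Hence case (c) obstructs norm convergence, which is the necessity direction of the theorem.

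The heart of the matter is case (b). First I would show $\|b_m^{(n)}(a)\|_{L^2(\mu_0)}\to0$: by Proposition \ref{vonNeumannG}, $\pi_\om(M_m(aV^n))\xi_\om=\frac1{\mu_G(F_m)}\int_{F_m}V_g^\om\pi_\om(aV^n)\xi_\om\,{\rm d}\mu_G(g)\to E_\om\pi_\om(aV^n)\xi_\om$; but $\pi_\om(aV^n)\xi_\om\in\ch_n$, and $\ch_n$ carries no nonzero $G$-fixed vector --- this is exactly what the previous proposition (and its proof) say, as $(u_g^{(n)})_{g\in G}$ is not a measurable coboundary --- so the limit is $0$, whereas $\|\pi_\om(bV^n)\xi_\om\|^2=\om\big(\a^{-n}(b^*b)\big)=\om_0(b^*b)=\|b\|_{L^2(\mu_0)}^2$ for all $b\in\ga$ (using that $\a$ preserves $\mu_0$, which follows from unique ergodicity of $(\ga,G,\theta)$ and $\a\circ\theta_g=\theta_g\circ\a$). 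To upgrade this to $\|b_m^{(n)}(a)\|_{\ga}\to0$ I would introduce the auxiliary \emph{commutative} $C^*$-dynamical system $(C(X_0\times\bt),G,\beta)$, the classical skew product attached to $(u_g^{(n)})_{g\in G}$: $\beta_g$ is the automorphism with $\beta_g|_{C(X_0)}=\theta_g$ and $\beta_g(1\otimes\iota)=u_g^{(n)}\otimes\iota$, where $\iota\in C(\bt)$ is the coordinate function, and \eqref{cocycle2} makes $\beta$ a genuine $G$-action. Since $\frac1{\mu_G(F_m)}\int_{F_m}\beta_g(a\otimes\iota)\,{\rm d}\mu_G(g)=b_m^{(n)}(a)\otimes\iota$, this average has norm $\|b_m^{(n)}(a)\|_{\ga}$. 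Now every $\beta$-invariant state $\psi$ of $C(X_0\times\bt)$ restricts to a $\theta$-invariant state of $C(X_0)$, hence to $\om_0$; using $\beta$-invariance of $\psi$ and $|\iota|=1$ we get $|\psi(a\otimes\iota)|=|\psi(b_m^{(n)}(a)\otimes\iota)|\le\om_0\big(|b_m^{(n)}(a)|\big)\le\|b_m^{(n)}(a)\|_{L^2(\mu_0)}\to0$, so $\psi(a\otimes\iota)=0$ for every $\beta$-invariant $\psi$. Finally, a uniform ergodic theorem --- if a continuous element integrates to the same constant against all invariant states, its Ces\`{a}ro averages converge \emph{uniformly} to that constant, which one deduces from weak-$*$ compactness of the state space and the F{\o}lner property of $\{F_m\}$ --- gives $\|b_m^{(n)}(a)\|_{\ga}\to0$, i.e.\ $M_m(aV^n)\to0$ in norm.

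Combining the cases, $M_m(x)$ converges in norm for every $x\in\ga\rtimes_\a\bz$ if and only if no $n\in\bz$ falls under case (c), that is, if and only if none of the $1$-cocycles $(u_g^{(n)})_{g\in G}$ is a measurable non-continuous coboundary --- which is the claimed equivalence. I expect the only genuinely delicate step to be the passage from $L^2(\mu_0)$- to uniform convergence in case (b), via the auxiliary skew product and the uniform ergodic theorem; everything else is bookkeeping with the cocycle relation and with the results of Sections \ref{prel}--\ref{dichotomy}.
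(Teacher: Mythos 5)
Your proof is correct, and the two halves compare differently with the paper's. The necessity half (your case (c)) is essentially the paper's own argument run contrapositively: both identify the limit of the Ces\`{a}ro averages of $aV^n$ as a continuous solution of the cohomological equation at level $n$ and invoke uniqueness of solutions up to a phase; your version is in fact tighter, since computing the $L^2(\mu_0)$-limit explicitly as $\om_0(aw^*)w$ and choosing $a$ with $\om_0(aw^*)\neq 0$ disposes of the degenerate possibility, glossed over in the paper, that the norm limit $bV^n$ be zero. The sufficiency half is a genuinely different route. The paper never proves norm convergence of $M_m(aV^n)$ when $(u_g^{(n)})_{g\in G}$ is not a measurable coboundary; instead it fixes an arbitrary invariant state $\varphi$, shows $\varphi(aV^{l})$ is determined by $\varphi|_{(\ga\rtimes_\a\bz)^G}$ when $l$ is a multiple of $m_0$, and otherwise derives a contradiction by manufacturing a measurable solution at level $l$ out of $\pi_\varphi(V^{-l})E_\varphi\pi_\varphi(aV^l)\xi_\varphi$ whenever $\varphi(aV^l)\neq 0$. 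You instead verify condition (ii) of Proposition \ref{ADgen} directly, which forces you to upgrade the $L^2(\mu_0)$-decay of $b_m^{(n)}(a)$ to uniform decay; your device for this --- transporting the coefficient dynamics $a\mapsto\theta_g(a)u_g^{(n)}$ into the classical Anzai skew product on $X_0\times\bt$ and applying a uniform ergodic theorem for amenable actions on compact spaces --- is sound and is essentially the mechanism behind the commutative case treated in \cite{DFR1}, but it leans on the commutativity of $\ga=C(X_0)$ and the full support of $\mu_0$ (both available here), and the uniform ergodic theorem, though standard (weak-$*$ limit points of the averaged point evaluations are invariant states by the F{\o}lner property), is not in the paper and would have to be written out. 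What your route buys is the explicit norm convergence of the ergodic averages, hence a concrete invariant conditional expectation as their limit; what the paper's route buys is brevity, since it only needs the weak information that $\varphi(aV^l)$ vanishes rather than uniform convergence of the coefficients.
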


\begin{proof}
We start by proving that the condition is necessary. We suppose that $(\ga\rtimes_\a\bz, G, \Phi^u)$ is uniquely ergodic
w.r.t. the fixed-point subalgebra and show, for every $n$ in $\bz$, that if the $1$-cocycle $(u_g^{(n)})_{g\in G}$ is a coboundary, then it is
a continuous coboundary.
Suppose there exists some $n$ in $\bz$ such that $(u_g^{(n)})_{g\in G}$ is a measurable coboundary. Then there exists
$\eta$ in $L^\infty(X_0, \mu_0)$  (thought of as $\pi_{\om}(C(X_0))''$ acting on $\ch_\om$) such that $\eta\pi_\om(V^n)\xi_\om$ is in
$\ch_\om^G$. Now if $a$ is any  element of $\ga$, by Proposition \ref{ADgen} the sequence $\{M_k(aV^n): k\in \bn\}$ converges in norm
to an element of the form $bV^n$, for some $b$ in $\ga$. We claim that $\pi_\om(bV^n)\xi_\om$ lies in
$\ch_\om^G$. But then the uniqueness up to a multiplicative scalar of the solutions to the cohomological equations at a fixed level implies
$\eta=\lambda\pi_\om(b)$, that is $\eta$ is a continuous solution. Let us now move on to show that the claim holds true. 
For every $h$ in $G$ we have:
\begin{align*}
V_h^\om \pi_\om(bV^n)\xi_\om &=\lim_{k\rightarrow\infty} V_h^\om\pi_\om(M_k(aV^n))\xi_\om\\
&=\lim_{k\rightarrow\infty} V_h^\om\pi_\om\left(\frac{1}{\mu(F_k)}	\int_{F_k}\a_g(aV^n){\rm d}\mu_G(g)\right)\xi_\om\\
&=\lim_{k\rightarrow\infty} \pi_\om\left(\frac{1}{\mu(F_k)}	\int_{F_k}\a_{hg}(aV^n){\rm d}\mu_G(g)\right)\xi_\om\\
&=\lim_{k\rightarrow\infty} \pi_\om\left(\frac{1}{\mu(F_k)}	\int_{hF_k}\a_g(aV^n){\rm d}\mu_G(g)\right)\xi_\om\\
&=\pi_\om(\lim_{k\rightarrow\infty} M_k(aV^n))\xi_\om=\pi_\om(bV^n)\xi_\om\,,
\end{align*}
where the second-last equality  follows from
$$\lim_{k\rightarrow\infty}\frac{1}{\mu(F_k)}	\int_{hF_k}\a_g(aV^n){\rm d}\mu_G(g)=\lim_{k\rightarrow\infty}M_k(a)\,, a\in\ga\,, $$
which in turn is a consequence of the F{\o}lner property. Indeed, we have:
\begin{align*}
&\left\|\int_{hF_k}\a_g(aV^n){\rm d}\mu_G(g)-M_k(a) \right\|\\
&=\left\|\int_{hF_k}\a_g(aV^n){\rm d}\mu_G(g)-\int_{F_k}\a_g(aV^n){\rm d}\mu_G(g) \right\|\\
&\leq \frac{\mu(hF_k\Delta F_k)}{\mu(F_k)}\|a\|\rightarrow 0
\end{align*}
as $k$ goes to $\infty$, which proves the claim.

We next prove that the condition is sufficient as well. To this aim, we will make use of a characterization of
unique ergodicity w.r.t. the fixed-point subalgebra due to Abadie and Dykema, \cite[Theorem ]{AD}: unique ergodicity is the same as requiring that every state defined on the fixed-point subalgebra uniquely extends to an invariant state on the whole
algebra.\\
This is in turn equivalent to showing that every  $\Phi^u$-invariant state $\varphi$
on $\ga\rtimes_\a\bz$ is completely determined by its restriction to $(\ga\rtimes_\a\bz)^G$.
Let $m_0$ be the minimum of the strictly positive integers $n$ such that there exists a non-null
$a\in \ga$ such that $\a^{-m_0}(u_g^{m_0})\th(a)=a$, for all $g$ in $G$.\\
The first thing we do is to compute the value of $\varphi$ on a monomial of the type $aV^{km_0}$. To this end, we first note that
\begin{align*}
&\lim_n M_n(aV^{km_0})=\lim_n M_n (aw_{km_0}^*w_{km_0}V^{km_0})\\
&=\lim_n\ m_n(aw_{km_0}^*)w_{km_0}V^{km_0} = \om_0(aw_{km_0}^*)w_{km_0}V^{km_0}
\end{align*}
At this point $\varphi(aV^{km_0})$ can be easily computed in the following way
\begin{align*}
\varphi(aV^{km_0})&=\varphi(aw_{km_0}^*w_{km_0}V^{km_0})=\lim_n\varphi\left(M_n(aw_{km_0}^*w_{km_0}V^{km_0})\right)\\
&=\om_0(aw_{km_0}^*w_{km_0})\varphi(w_{km_0}V^{km_0})\,,
\end{align*}
which shows that $\varphi(aV^{km_0})$ only depends on the restriction of $\varphi$ to $(\ga\rtimes_\a\bz)^G$.\\
We next show that if $l$ is not a multiple of $m_0$, then  $\varphi(aV^l)=0$ for all $a\in\ga$.
We shall argue by contradiction. Suppose there do exist $a$ in $\ga$ and $l$ in $\bz\setminus m_0\bz$ such that
$\varphi(aV^l)\neq 0$. We are going to exhibit a non-trivial solution of the cohomological equation right at the level
$l$. To this end, for all $g$ in $G$ and $l$ in $\bz$ we set $h_{g, l}:=\Phi_g^u(V^l)V^{-l}$. Note that
$h_{g, l}$ sits in $\ga$.\\
We start with the following preliminary limit equality

\begin{align*}
&\lim_n\pi_\varphi(M_n(aV^l))\xi_\varphi=\lim_n\frac{1}{|F_n|}\int_{F_n} U_g^\varphi    \pi_\varphi(aV^l) U_g^\varphi \xi_\varphi{\rm d}\mu_G(g)\\
&=\lim_n\frac{1}{|F_n|}\int_{F_n} U_g^\varphi    \pi_\varphi(aV^l)\xi_\varphi{\rm d}\mu_G(g)= E_\varphi\pi_\varphi(aV^l)\xi_\varphi\,,
\end{align*}
which holds in $\ch_\varphi$, with $E_\varphi$ being the orthogonal projection onto
$\ch_\varphi^G=\{\xi\in\ch_\varphi: U_g^\varphi\xi=\xi,\, \textrm{for all}\, g\in G\}$ thanks to
Proposition \ref{vonNeumannG}.\\
We  are going to show that $\pi_\varphi(V^{-l})E_\varphi\pi_\varphi(aV^l)\xi_\varphi$ provides the sought
solution. This can be seen in the following way
\begin{align*}
&\pi_\varphi(V^{-l})E_\varphi\pi_\varphi(aV^l)\xi_\varphi=\lim_n\pi_\varphi(V^{-l}M_n(aV^l))\xi_\varphi\\
&=\lim_n\frac{1}{|F_n|}\int_{F_n}\pi_\varphi(V^{-l}\theta_g(a) h_{g, l} V^l)\xi_\varphi{\rm d}\mu_G(g)\\
&=
\lim_n\frac{1}{|F_n|}\int_{F_n}\pi_\varphi(\alpha^l(\theta_g(a) h_{g, l}))\xi_\varphi{\rm d}\mu_G(g)\\
&=\lim_n\frac{1}{|F_n|}\int_{F_n}\om_0(\alpha^l(\theta_g(a) h_{g, l}))\xi_{\om_0}{\rm d}\mu_G(g)
\end{align*}
which shows that $\pi_\varphi(V^{-l})E_\varphi\pi_\varphi(aV^l)\xi_\varphi$ is in $\ch_{\om_0}$. Moreover

\begin{align*}
&\pi_{\om_o}(\a^{-n}(u_g^{(n)}))V_{\om_o,\th}\pi_\varphi(V^{-l})E_\varphi\pi_\varphi(aV^l)\xi_\varphi\, \\
&=\pi_{\varphi}(\a^{-n}(u_g^{(n)}))U_{\varphi}\pi_\varphi(V^{-l})E_\varphi\pi_\varphi(aV^l)\xi_\varphi\\
&=\pi_{\varphi}(\a^{-n}(u_g^{(n)})\a^{-n}(u_n^*)V^{-l})E_\varphi\pi_\varphi(aV^l)\xi_\varphi\\
&=\pi_{\varphi}(V^{-l})E_\varphi\pi_\varphi(a
V^l)\xi_\varphi
\end{align*}
which shows that $\pi_\varphi(V^{-l})E_\varphi\pi_\varphi(aV^l)\xi_\varphi$ is indeed a solution.
It only remains to show that $\pi_\varphi(V^{-l})E_\varphi\pi_\varphi(aV^l)\xi_\varphi\neq 0$ but this follows directly from
\begin{equation*}
\varphi(aV^\ell)=\langle\xi_\varphi,E_\varphi\pi_\varphi(aV^l) \xi_\varphi\rangle 
\end{equation*}
which in turn follows by the invariance of $\varphi$.
\end{proof}
From the proof of the foregoing result, we see at once that unique ergodicity can be characterized by
the non-existence of coboundaries ({\it i.e.} by the fact that the cohomological equations for $n\geq 1$ do not have non-trivial solutions at all).
\begin{cor}\label{uniqueerg}
A necessary and sufficient condition for $(\ga\rtimes_\a\bz, G, \Phi^u)$ to be uniquely ergodic, with unique
$G$-invariant state $\om$, is that for every $n$ in $\bz$ the $1$-cocycle $(u_g^{(n)})_{g\in G}$ is not a coboundary.
\end{cor}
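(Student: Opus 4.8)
The plan is to derive the Corollary directly from Proposition \ref{weakergo} and Theorem \ref{uniquelyerg}, using that unique ergodicity is precisely the conjunction of weak ergodicity and unique ergodicity w.r.t.\ the fixed-point subalgebra.

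\textbf{Step 1 (reduction).} First I would record that, for $G$ as in our standing hypotheses, a $C^*$-dynamical system is uniquely ergodic if and only if it is both weakly ergodic (i.e.\ has trivial fixed-point subalgebra) and uniquely ergodic w.r.t.\ the fixed-point subalgebra. Indeed, unique ergodicity forces the fixed-point subalgebra to be $\bc 1$ by the Corollary following Proposition \ref{ADgen}, whence only one state lives on it, and that state trivially has a unique invariant extension; conversely, if $(\ga\rtimes_\a\bz)^G=\bc 1$, there is a single state on the fixed-point subalgebra, and uniqueness of its invariant extension means the whole algebra carries a single invariant state.

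\textbf{Step 2 (combining the two characterizations).} Applying this to $(\ga\rtimes_\a\bz,G,\Phi^u)$: by Proposition \ref{weakergo}, weak ergodicity holds exactly when $(u_g^{(n)})_{g\in G}$ is not a \emph{continuous} coboundary for any $n\neq0$; by Theorem \ref{uniquelyerg}, unique ergodicity w.r.t.\ the fixed-point subalgebra holds exactly when $(u_g^{(n)})_{g\in G}$ is not a \emph{measurable non-continuous} coboundary for any $n\in\bz$ (the case $n=0$ being vacuous, as $u_g^{(0)}=I=\theta_g(1^*)1$ is a continuous coboundary). Since a measurable coboundary is either a continuous coboundary or a measurable non-continuous one, the conjunction of the two conditions says precisely that no $(u_g^{(n)})_{g\in G}$ with $n\neq0$ is a coboundary at all; equivalently, repeating the argument of Proposition \ref{weakergo} inside $L^\infty(X_0,\mu_0)$ rather than $C(X_0)$, that the cohomological equations $\theta_g(a)u_g^{(n)}=a$ $(g\in G)$ have no non-zero $L^\infty(X_0,\mu_0)$ solution for any $n\neq0$. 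The unique invariant state is $\om$: this is already asserted in the Corollary after Proposition \ref{ADgen}, and can also be read off from the fact that, once all these cohomological equations have only the trivial solution, any invariant state $\varphi$ satisfies $\varphi(aV^l)=0$ for $l\neq0$ (as in the sufficiency part of the proof of Theorem \ref{uniquelyerg}, where now no admissible $m_0$ exists), so that $\varphi=\om_0\circ E=\om$.

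\textbf{Main point.} There is essentially no obstacle, since the substantive work was already carried out in Proposition \ref{weakergo} and Theorem \ref{uniquelyerg}; the only thing to keep straight is the terminology around ``coboundary'', namely that continuous coboundaries form a subclass of the measurable ones, so that simultaneously excluding continuous coboundaries and measurable non-continuous coboundaries amounts to excluding every measurable coboundary.
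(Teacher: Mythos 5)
Your proposal is correct and follows essentially the same route as the paper, which likewise obtains the corollary as an immediate by-product of Proposition \ref{weakergo} and (the proof of) Theorem \ref{uniquelyerg}: excluding continuous coboundaries and measurable non-continuous coboundaries simultaneously is exactly excluding all coboundaries, i.e.\ the cohomological equations for $n\neq 0$ have no non-trivial solutions at all. Your explicit remark that the $n=0$ cocycle is trivially a continuous coboundary (so the quantifier must be read as $n\neq 0$) is a correct and worthwhile clarification of the statement.
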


\subsection{Invariant states and invariant conditional expectations}\label{invstate}
This section provides the complete description of the compact convex set of all invariant
states of a skew-product system. We will be working under the tacit assumption that the cohomological equations have non-trivial solutions, otherwise in the light of Corollary \ref{uniqueerg} the system will be uniquely ergodic and there is not much to say about the structure of the set of invariant states. Accordingly, $n_0$ will denote the smallest natural number for which the cohomological equations \eqref{ncoboundary} have a non-trivial solution, which we denote by $w_{n_0}$. Explicitly, $w_{n_0}$ is a function in $L^\infty(X_0, \mu_0)$ satisfying
$\theta_g(w_{n_0})u_g^{(n_0)}=w_{n_0}$  for all $g$ in $G$, the equality being understood in $L^\infty(X_0,  \mu_0)$.\\
The proof of the main result of the section requires some preliminary work to do. To this aim, we start by recalling how some natural conditional expectations,  which we will need to make use of, can be defined on a crossed product. We denote by $\beta_{n_0}$ the automorphism of $\ga\rtimes_\a\bz$ acting on the generators as
$\beta_{n_0}(a)=a$, for all $a$ in $\ga$, $\beta_{n_0}(V)=e^{\frac{2\pi i}{n_0}}V$. We then define
$(\ga\rtimes_\a\bz)^{\beta_{n_0}}:=\{x\in\ga\rtimes_\a\bz\colon \beta_{n_0}(x)=x\}$. Note that 
$(\ga\rtimes_\a\bz)^{\beta_{n_0}}$ is the $C^*$-subalgebra of  $\ga\rtimes_\a\bz$ generated by
$\ga$ and $V^{n_0}$.\\
We denote by $\mathcal{E}_{n_0}$ the canonical conditional expectation of $\ga\rtimes_\a\bz$ onto
$(\ga\rtimes_\a\bz)^{\beta_{n_0}}$, \emph{i.e. }$\mathcal{E}_{n_0}=\frac{1}{n_0}\sum_{k=0}^{n_0-1} \beta_{n_0}^k$.\\
Our next goal is to define a map $\widetilde{T}\colon(\ga\rtimes_\a\bz)^{\beta_{n_0}}\rightarrow \pi_\om((\ga\rtimes_\a\bz)^{\beta_{n_0}})''$ that acts trivially on $\ga$ and sends $V^{n_0}$ to
$w_{n_0}^*V^{n_0}$.
Note that this map is well defined, namely it extends to a $*$-homomorphism defined on the whole
$(\ga\rtimes_\a\bz)^{\beta_{n_0}}$ thanks to the universal property of the cross product, for the adjoint action of
$w_{n_0}^*V^{n_0}$ on $\ga$ is still  $\alpha$ due to $\ga$ being commutative.\\
Lastly, we would like to define an expectation $\Phi:\pi_\om((\ga\rtimes_\a\bz)^{\beta_{n_0}})''\rightarrow L^\infty(\bt, m)$ given by $\Phi(aV^{ln_0})=\om_0(a)V^{ln_0}$, for $a$ in $\ga$ and $l$ in $\bz$.
This is seen to extend on the whole $\pi_\om((\ga\rtimes_\a\bz)^{\beta_{n_0}})''$ by $\om$-invariance (which means
$\Phi$ is spatially implemented by an isometry of $\ch_\om$).
By composing the three maps above we get a map $T$ defined on $\ga\rtimes_\a\bz$ by
\begin{equation}\label{mapT}
T:=\Phi\circ\widetilde{T}\circ\mathcal{E}_{n_0}
\end{equation}
Note that
\begin{equation} \label{Tonmon}
T(aV^{kn_0})=\widetilde{\om}_0(aw_{k_0}^*)V^{kn_0}\,,\,\,\, \textrm{for all}\,\, a\, {\rm in}\, \ga,\, k\, {\rm in}\, \bz\,.
\end{equation}
The map $T$ is invariant under the dynamics and its range is as large as to contain all the powers of $V^{n_0}$.
\begin{lem}\label{RanT}
The unital completely positive map $T$ defined by \eqref{mapT} is invariant under  $\Phi^u_g$ for all $g$ in $G$.
Moreover, its range contains  ${\rm span}\{V^{kn_0}\colon k\in\bz\}$. 
\end{lem}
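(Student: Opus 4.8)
The plan is to unwind the composition $T=\Phi\circ\widetilde{T}\circ\mathcal{E}_{n_0}$ and verify the three assertions on the norm-dense $*$-subalgebra of finite sums $\sum_k a_kV^k$ ($a_k\in\ga$), extending afterwards by boundedness. Complete positivity needs no real work: $\mathcal{E}_{n_0}=\frac1{n_0}\sum_{j=0}^{n_0-1}\beta_{n_0}^j$ is a conditional expectation, $\widetilde{T}$ is a unital $*$-homomorphism, and $\Phi$ is spatially implemented by an isometry of $\ch_\om$; each of these is unital and completely positive, hence so is their composite $T$.

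For the $\Phi_g^u$-invariance it is enough to check $T(\Phi_g^u(aV^l))=T(aV^l)$ on monomials. Since $\Phi_g^u(aV^l)=\theta_g(a)u_g^{(l)}V^l$ with $\theta_g(a)u_g^{(l)}\in\ga$ (here one uses that the cocycle is continuous), the case $n_0\nmid l$ is trivial, as then $\mathcal{E}_{n_0}$ annihilates both $aV^l$ and $\theta_g(a)u_g^{(l)}V^l$. When $l=kn_0$, I would invoke \eqref{Tonmon} to write
\[
T\big(\Phi_g^u(aV^{kn_0})\big)=\widetilde{\om}_0\big(\theta_g(a)\,u_g^{(kn_0)}\,w_{kn_0}^*\big)\,V^{kn_0},
\]
where $w_{kn_0}\in L^\infty(X_0,\mu_0)$ is the solution of the cohomological equation at level $kn_0$, so that $\widetilde{\theta}_g(w_{kn_0})\,u_g^{(kn_0)}=w_{kn_0}$. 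As $\pi_{\om_0}(\ga)''\cong L^\infty(X_0,\mu_0)$ is commutative and $u_g^{(kn_0)}$ is unitary, this identity reads $u_g^{(kn_0)}w_{kn_0}^*=\widetilde{\theta}_g(w_{kn_0}^*)$, whence
\[
T\big(\Phi_g^u(aV^{kn_0})\big)=\widetilde{\om}_0\big(\widetilde{\theta}_g(a\,w_{kn_0}^*)\big)\,V^{kn_0}=\widetilde{\om}_0\big(a\,w_{kn_0}^*\big)\,V^{kn_0}=T(aV^{kn_0}),
\]
the middle equality holding because the normal extension $\widetilde{\om}_0$ of the $G$-invariant state $\om_0$ is $\widetilde{\theta}_g$-invariant, and the last one by \eqref{Tonmon} again.

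For the range, I would read off from \eqref{Tonmon} that $T(aV^{kn_0})=\big(\int_{X_0}a\,\overline{w_{kn_0}}\,{\rm d}\mu_0\big)V^{kn_0}$ for $a\in C(X_0)$. Since $w_{kn_0}$ is a non-zero vector of $L^2(X_0,\mu_0)$ and $C(X_0)$ is dense there, one can choose $a\in C(X_0)$ with $c:=\int_{X_0}a\,\overline{w_{kn_0}}\,{\rm d}\mu_0\neq0$, so that $V^{kn_0}=T(c^{-1}aV^{kn_0})$ belongs to the range of $T$; letting $k$ range over $\bz$ and using linearity yields ${\rm span}\{V^{kn_0}:k\in\bz\}\subseteq T(\ga\rtimes_\a\bz)$. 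I expect the only point requiring genuine care to be the bookkeeping inside the invariance step: one has to identify correctly the level-$kn_0$ solution (namely $w_{kn_0}=w_{n_0}\,\a^{n_0}(w_{n_0})\cdots\a^{(k-1)n_0}(w_{n_0})$ for $k>0$, with the evident analogue for $k<0$) and keep in mind that it is in general only measurable, so that every manipulation involving it takes place in $L^\infty(X_0,\mu_0)$; this is also why, for the range, one approximates $w_{kn_0}$ by continuous functions rather than plugging $a=w_{kn_0}$ directly into \eqref{Tonmon}.
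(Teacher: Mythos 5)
Your argument is correct and follows essentially the same route as the paper: invariance is checked on monomials $aV^l$, with the case $n_0\nmid l$ trivial and the case $l=kn_0$ reduced via the cohomological identity $\widetilde{\theta}_g(w_{kn_0}^*)=u_g^{(kn_0)}w_{kn_0}^*$ and the $\widetilde{\theta}_g$-invariance of $\widetilde{\om}_0$, while the range statement follows by pairing the unitary $w_{kn_0}$ against continuous functions. The only (immaterial) difference is that for the range you invoke density of $C(X_0)$ in $L^2(X_0,\mu_0)$, whereas the paper uses a uniformly bounded sequence converging a.e.\ to $w_{kn_0}$ together with dominated convergence.
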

\begin{proof}
In order to prove the equality $T\circ\Phi^u_g=T$, $g$ in $G$,  it is enough to check it holds on monomials of the type
$aV^l$, $a$ in $\ga$ and $l$ in $\bz$. There are two cases depending on whether $l$ is or is not a multiple
of $n_0$. If $l$ is not a multiple of $n_0$, then the two sides of the equality certainly agree being both zero.
If $l=kn_0$ is a multiple of $n_0$, then  the right-hand side is $T(aV^{kn_0})=\widetilde{\om}_0(aw_{k_0}^*)V^{kn_0}$. As for the left-hand side, we have
\begin{align*}
T\circ\Phi^u_g(aV^{kn_0})&=T\circ\Phi^u_g(aw_{kn_0}^*w_{kn_0}V^{kn_0})= T(\widetilde{\theta}_g(aw_{kn_0}^*)w_{kn_0}V^{kn_0})\\
&=\widetilde{\om}_0(\widetilde{\theta}_g(aw_{kn_0}^*))V^{kn_0}=\widetilde{\om}_0(aw_{kn_0}^*)V^{kn_0}\, .
\end{align*}
As for the second part of the statement, set $\ga=C(X_0)$. We claim that for every $k$ in $\bz$ there exists $h$ (which may depend on $k$) in $C(X_0)$ with 
$\om_0(w_{kn_0}^*h)=\int \overline{u_{kn_0}}h {\rm d}\mu_0 \neq 0$ ($\mu_0$ is the probability measure on $X_0$ associated with the state $\om_0$). Since $T(hV^{kn_0})=\om_0(w_{kn_0}^*h)V^{kn_0}$ and $\om_0(w_{kn_0}^*h)$ is different from $0$, we see that $V^{kn_0}$ sits in the range of $T$.\\
All is left to do is prove the initial claim. To this end, consider a uniformly bounded sequence  
$\{h_l\colon l\in\bn\}\subset C(X_o)$ such that $h_l$ converges to $u_{kn_0}$  a.e. with respect to $\mu_o$. 
By Lebesgue's dominated convergence theorem we have $\lim_{l\rightarrow\infty} \int \overline{u_{kn_0}}h_l {\rm d}\mu =1$, which shows that for $l$ large enough $h_l$ will work as our $\widetilde{h}$.
\end{proof}

 In what follows $\overline{{\rm span}}\{V^{kn_0}\colon k\in\bz\}$ is identified with
$C(\bt)$ through the $*$-automorphism that sends the generator $V^{n_0}$ to the generator of
$C(\bt)$, that is the identical function $f(z)=z$, $z$ in $\bt$. 
This allows us to associate with any Borel probability measure $\mu$ on $\bt$ a state $\varphi_\mu$
 on $\overline{{\rm span}}\{V^{kn_0}\colon k\in\bz\}=C^*(V^{n_0})$, defined as
$\varphi_\mu(V^{kn_0}):=\int_\bt z^k {\rm d}{\mu}(z)$, $k$ in $\bz$.

In order to arrive at the sought characterization of the set of all invariant states, we also need a technical result
which basically says that the GNS representation of any invariant state contains a homomorphic image of the whole of the von Neumann algebra $L^\infty(X_0, \mu_0)$. More precisely, we have the following.
\begin{lem}
For any invariant state $\varphi$ in $\cs(\ga\rtimes_\a\bz)^G$, the GNS representation
$\pi_\varphi$ restricted to $\pi_{\om_0}(\ga)$  extends to a normal representation $\widetilde{\varphi}$ of $\pi_{\om_0}(\ga)''\cong L^\infty(X_0, \mu_0)$.
\end{lem}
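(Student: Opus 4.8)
The plan is to reduce the assertion to an absolute-continuity property of the scalar spectral measures of the representation $\pi_\varphi|_\ga$ and then invoke the spectral theorem for the commutative $C^*$-algebra $\ga=C(X_0)$. The starting point is a pair of consequences of the unique ergodicity of $(\ga,G,\theta)$: since $\varphi$ is $\Phi^u$-invariant and $\Phi^u_g$ restricts to $\theta_g$ on $\ga$, the functional $\varphi|_\ga$ is a $\theta$-invariant state, so $\varphi|_\ga=\om_0$; and since $\a$ commutes with every $\theta_g$, the state $\om_0\circ\a$ is again $\theta$-invariant, so $\om_0\circ\a=\om_0$. These two identities are the only place where the standing hypotheses on $(\ga,G,\theta)$ are used.

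Next I would set up the spectral picture. As $\pi_\varphi|_\ga$ is a unital $*$-representation of the commutative $C^*$-algebra $C(X_0)$ on $\ch_\varphi$, it is implemented by a projection-valued measure $P$ on the Borel sets of $X_0$, so that $\pi_\varphi(f)=\int_{X_0}f\,dP$ for $f\in C(X_0)$; for $\xi\in\ch_\varphi$ put $\mu_\xi:=\langle\xi,P(\cdot)\xi\rangle$, a finite positive Borel measure with $\int_{X_0}f\,d\mu_\xi=\langle\xi,\pi_\varphi(f)\xi\rangle$. The goal then becomes the claim that $\mu_\xi\ll\mu_0$ for every $\xi\in\ch_\varphi$. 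Granting this, each $\mu_0$-null Borel set $A$ satisfies $\langle\xi,P(A)\xi\rangle=\mu_\xi(A)=0$ for all $\xi$, whence $P(A)=0$; therefore $P$ descends to the measure algebra of $(X_0,\mu_0)$, and spectral integration against $P$ produces a unital normal $*$-homomorphism $\widetilde\varphi\colon L^\infty(X_0,\mu_0)\to\cb(\ch_\varphi)$, $\widetilde\varphi(g):=\int_{X_0}g\,dP$, with normality coming from the $\sigma$-weak continuity of spectral integration. Since $\widetilde\varphi(f)=\pi_\varphi(f)$ for $f\in C(X_0)$, under the identification $\pi_{\om_0}(\ga)''\cong L^\infty(X_0,\mu_0)$ the homomorphism $\widetilde\varphi$ is the desired normal extension of $\pi_\varphi$ on $\pi_{\om_0}(\ga)$ (that $P$ kills $\mu_0$-null sets is also what makes the restriction written in the statement meaningful).

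To establish $\mu_\xi\ll\mu_0$ for all $\xi$, I would show that $\cx:=\{\xi\in\ch_\varphi\colon\mu_\xi\ll\mu_0\}$ is a closed linear subspace containing a dense subset. Closedness follows from the total-variation estimate $\|\mu_\xi-\mu_\zeta\|\le\|\xi-\zeta\|\,(\|\xi\|+\|\zeta\|)$ together with the norm-closedness of $L^1(\mu_0)$ inside $M(X_0)$. Linearity needs the cross term in $\mu_{\xi+\zeta}=\mu_\xi+\mu_\zeta+\langle\xi,P(\cdot)\zeta\rangle+\langle\zeta,P(\cdot)\xi\rangle$ to be controlled: the Cauchy--Schwarz bound $|\langle\xi,P(A)\zeta\rangle|\le\mu_\xi(A)^{1/2}\mu_\zeta(A)^{1/2}$ for the associated sesquilinear spectral measure shows $\langle\xi,P(\cdot)\zeta\rangle\ll\mu_0$ whenever $\mu_\xi,\mu_\zeta\ll\mu_0$. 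For the dense subset I would take the vectors $\pi_\varphi(aV^n)\xi_\varphi$, $a\in\ga$, $n\in\bz$: their linear span is dense in $\ch_\varphi$ because $\xi_\varphi$ is cyclic and, by the Ces\`{a}ro--Fourier expansion in $\ga\rtimes_\a\bz$ recalled in Section \ref{prel}, every $\pi_\varphi(x)\xi_\varphi$ is a norm limit of linear combinations of such vectors. Finally, for $\xi=\pi_\varphi(aV^n)\xi_\varphi$ one computes, using $V^{-n}cV^{n}=\a^{-n}(c)$ for $c\in\ga$ and then the two identities from the first paragraph,
\begin{align*}
\int_{X_0}f\,d\mu_\xi&=\varphi\big((aV^n)^*f\,(aV^n)\big)=\varphi\big(\a^{-n}(a^*fa)\big)\\
&=\om_0(a^*fa)=\int_{X_0}f\,|a|^2\,d\mu_0\,,\qquad f\in C(X_0)\,,
\end{align*}
so $d\mu_\xi=|a|^2\,d\mu_0\ll\mu_0$; hence $\cx=\ch_\varphi$ and the proof is complete.

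The one genuinely delicate step is the linearity of $\cx$: absolute continuity of $\mu_\xi$ and of $\mu_\zeta$ carries no a priori information about $\mu_{\xi+\zeta}$, and it is exactly the Cauchy--Schwarz estimate for the sesquilinear spectral measure that tames the cross terms. Every other ingredient is either routine bookkeeping with unique ergodicity or classical spectral theory for commutative $C^*$-algebras.
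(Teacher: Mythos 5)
Your proof is correct, but it follows a genuinely different route from the paper's. The paper constructs the extension directly: it defines $\widetilde{\pi}_\varphi(X)$ on the dense subspace $\pi_\varphi(\ga\rtimes_\a\bz)\xi_\varphi$ by letting $X$ act on the Fourier components of a finite sum $\sum_k V^k b_k$, proves the requisite norm bound by approximating $X$ with a bounded net from $\pi_{\om_0}(\ga)$, and obtains multiplicativity and normality from strong continuity on the unit ball together with the Kaplansky density theorem. You instead invoke the spectral theorem for the commutative algebra $C(X_0)$ and reduce the entire statement to the absolute continuity $\mu_\xi\ll\mu_0$ of every scalar spectral measure of $\pi_\varphi|_{\ga}$; you verify this on the total family $\pi_\varphi(aV^n)\xi_\varphi$ by the explicit computation $\di\mu_\xi=|a|^2\,\di\mu_0$ --- which is exactly where the two consequences of unique ergodicity, $\varphi|_\ga=\om_0$ and $\om_0\circ\a=\om_0$, enter --- and then propagate it to all of $\ch_\varphi$ by the closed-subspace argument, whose only delicate point (the cross terms) you correctly tame with the Cauchy--Schwarz inequality for the sesquilinear spectral measure. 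What your route buys is conceptual transparency: it identifies the content of the lemma as quasi-containment of $\pi_\varphi|_{\ga}$ in $\pi_{\om_0}$, makes the role of unique ergodicity explicit, and also justifies why ``restricting $\pi_\varphi$ to $\pi_{\om_0}(\ga)$'' is meaningful. The price is that it leans on commutativity of $\ga$ through the projection-valued measure, whereas the paper's more algebraic construction works at the level of the Fourier expansion and delivers the extension in the explicit left-multiplication form that is exploited afterwards, for instance in Remark \ref{cov}. The two extensions necessarily coincide, both being normal and agreeing on the $\sigma$-weakly dense subalgebra $\pi_\varphi(\ga)$.
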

\begin{proof}
Let $X$ be an operator belonging to the von Neumann algebra  $\pi_{\om_0}(\ga)''$. Our next goal is to define
a bounded operator $\widetilde{\pi}_\varphi(X)$ acting on $\ch_\varphi$. We start by defining it on the dense subspace
corresponding to the cyclic vector $\xi_\varphi$ in the following way:
\begin{equation}\label{defemb}
\widetilde{\pi}_\varphi(X) \pi_\varphi(\sum_{k\in F} V^k b_k)\xi_\varphi=\pi_\varphi (\sum_{k\in F} V^k\a^k(X)b_k)\xi_{\om_0}
\end{equation}
where $F\subset\bz$ is a  finite set of indices and $b_k$ an element of $\ga$ for all $k$ in $F$.\\
In order to verify that \eqref{defemb} yields a well-defined bounded operator on $\pi_\varphi((\ga\rtimes_\a\bz))\xi_\varphi$, we next show that the inequality
$$ \|\pi_\varphi (\sum_{k\in F} V^k\a^k(X)b_k)\xi_{\om_0}\|^2\leq \|X\|^2\|\pi_\varphi(\sum_{k\in F} V^k b_k)\xi_\varphi\|^2$$
holds. This can be seen by taking a net $\{X_\beta: \beta\in I\}$ (say $X_\beta=\pi_{\om_0}(a_\beta)$ for some
net $\{a_\b: \b\in I\}$) in $\ga$ with $\|X_\beta\|\leq \|X\|$ for all $\beta$ in $I$.
\begin{align*}
&\|\pi_\varphi (\sum_{k\in F} V^k\a^k(X)b_k)\xi_{\om_0}\|^2=\lim_\b\|\pi_\varphi (\sum_{k\in F} V^k\a^k(X_\b)b_k)\xi_{\om_0}\|^2\\
&=\lim_\b\|\pi_\varphi (\sum_{k\in F} V^k\a^k(\pi_{\om_0}(a_\beta))b_k)\xi_{\om_0}\|^2=
\lim_\b \| \pi_{\om_0}(a_\b)\pi_\varphi(\sum_{k\in F}V^kb_k)\xi_\varphi\|^2\\
&\leq\|X\|^2 \|\pi_\varphi(\sum_{k\in F}V^kb_k)\xi_\varphi\|^2\,.
\end{align*}
All is left to do is show $\widetilde{\pi}_\varphi$ is a representation. Now from its definition it is obvious that
the restriction of $\widetilde{\pi}_\varphi$ to $\pi_{\om_0}(\ga)$ is $*$-preserving and multiplicative.
The full conclusion will be reached by strong density of  $\pi_{\om_0}(\ga)$ in its bicommutant  if we prove that 
$\widetilde{\pi}_\varphi$ is a strongly continuous map. Now, as a straightforward  consequence of \eqref{defemb} we see that $\widetilde{\pi}_\varphi$ is strongly continuous on the unit ball of
$\pi_{\om_0}(\ga)''$. Thus by the Kaplansky density theorem the map $\widetilde{\pi}_\varphi$ is strongly continuous on the whole $\pi_{\om_0}(\ga)''$.
\end{proof}

\begin{rem}\label{cov}
For any operator $X$ sitting in the von Neumann algebra $\pi_{\om_0}(\ga)''$ one has
$$\widetilde{\pi}_\varphi (V_g^{\om_0}X(V_g^{\om_0})^*)= V_g^\varphi\widetilde{\pi}_\varphi(X)(V_g^\varphi)^*\,.$$
The above equality follows from  $\widetilde{\pi}_\varphi(\pi_{\om_0}(a))=\pi_\varphi(a)$, $a$ in $\ga$. Indeed, one has $\widetilde{\pi}_\varphi(V_g^{\om_0}\pi_{\om_0}(a)(V_g^{\om_0})^*)=\widetilde{\pi}_\varphi(\pi_{\om_0}(\theta_g(a)))=\varphi(\theta_g(a))=V_g^\varphi \pi_\varphi(a) {V_g^{\varphi }}^*$ for all $a$ in $\ga$, whence the conclusion by
normality of  $\widetilde{\pi}_\varphi$.
\end{rem}
 We are eventually ready to provide the structure of the set of all invariant states.
\begin{thm}\label{invstates}
If a skew-product $(\ga\rtimes_\a\bz, G,  \Phi^u )$  is not uniquely ergodic, then the
 map $\Psi\colon \cp(\bt)\rightarrow \cs(\ga\rtimes_\a\bz)^G$
given by 
$$\Psi(\mu):=\varphi_\mu\circ T,\,\, 	\mu\in \cp(\bt)$$
is an affine homeomorphism between compact convex sets.
\end{thm}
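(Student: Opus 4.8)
The plan is to establish the formal properties of $\Psi$ directly and then to spend the real effort on surjectivity. By Lemma~\ref{RanT} the map $T$ is unital completely positive and $T\circ\Phi^u_g=T$ for all $g\in G$, so for $\mu\in\cp(\bt)$ the composition $\varphi_\mu\circ T$ is a state on $\ga\rtimes_\a\bz$ and $(\varphi_\mu\circ T)\circ\Phi^u_g=\varphi_\mu\circ(T\circ\Phi^u_g)=\varphi_\mu\circ T$; hence $\Psi$ is well defined into $\cs(\ga\rtimes_\a\bz)^G$, and it is affine because $\mu\mapsto\varphi_\mu$ is. Since $T$ takes values in $\overline{{\rm span}}\{V^{kn_0}\colon k\in\bz\}\cong C(\bt)$, weak-$*$ convergence $\mu_i\to\mu$ gives $\Psi(\mu_i)(x)=\varphi_{\mu_i}(T(x))\to\varphi_\mu(T(x))=\Psi(\mu)(x)$ for every $x$, so $\Psi$ is continuous. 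For injectivity, if $\Psi(\mu)=\Psi(\nu)$ then choosing for each $k$ an $h_k\in\ga$ with $c_k:=\widetilde{\om}_0(w_{kn_0}^*h_k)\neq 0$ (as in the proof of Lemma~\ref{RanT}) gives $c_k\int_\bt z^k\,{\rm d}\mu=\varphi_\mu(T(h_kV^{kn_0}))=\varphi_\nu(T(h_kV^{kn_0}))=c_k\int_\bt z^k\,{\rm d}\nu$ for all $k\in\bz$, whence $\mu=\nu$. As $\cp(\bt)$ is weak-$*$ compact and $\cs(\ga\rtimes_\a\bz)^G$ is Hausdorff, it remains only to prove that $\Psi$ is onto.

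So fix $\varphi\in\cs(\ga\rtimes_\a\bz)^G$ with GNS data $(\pi_\varphi,\ch_\varphi,\xi_\varphi)$, and let $V_g^\varphi$, $\ch_\varphi^G$, $E_\varphi$ be as in Section~\ref{generg}. Since $\varphi|_{\ga}$ is a $G$-invariant state of $\ga$, unique ergodicity forces $\varphi|_{\ga}=\om_0$, so $\overline{\pi_\varphi(\ga)\xi_\varphi}$ (with $\pi_\varphi|_{\ga}$ and $\xi_\varphi$) is a copy of $\ch_{\om_0}=L^2(X_0,\mu_0)$ on which $V_g^\varphi$ restricts to $V_g^{\om_0}$. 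The first reduction is that $\varphi(aV^l)=0$ whenever $n_0\nmid l$. Arguing as in the proof of Theorem~\ref{uniquelyerg}: if $\varphi(aV^l)\neq 0$, set $\xi:=E_\varphi\pi_\varphi(aV^l)\xi_\varphi$, nonzero since $\langle\xi_\varphi,\xi\rangle=\varphi(aV^l)$, and $\psi:=\pi_\varphi(V^{-l})\xi$. Averaging $\pi_\varphi(M_n(aV^l))\xi_\varphi=\frac1{\mu_G(F_n)}\int_{F_n}V_g^\varphi\pi_\varphi(aV^l)\xi_\varphi\,{\rm d}\mu_G(g)$ and invoking Proposition~\ref{vonNeumannG} exhibits $\psi$ as a norm limit of vectors $\pi_\varphi(c)\xi_\varphi$, $c\in\ga$, so $\psi\in\overline{\pi_\varphi(\ga)\xi_\varphi}\cong L^2(X_0,\mu_0)$; moreover, using $\Phi^u_g(V^{-l})=u_g^{(-l)}V^{-l}=\a^{-l}((u_g^{(l)})^*)V^{-l}$ and $V_g^\varphi\xi=\xi$ one finds $\pi_\varphi(\a^{-l}(u_g^{(l)}))V_g^\varphi\psi=\psi$ for all $g$, i.e.\ $\psi$ is a nonzero $L^2$-solution of the cohomological equation \eqref{comH} at level $l$. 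Taking moduli, $|\psi|$ is $G$-invariant in $L^2(X_0,\mu_0)$, hence a nonzero constant by ergodicity of $\mu_0$, so $\psi/|\psi|$ is a unitary $L^\infty$-solution; by minimality of $n_0$ this forces $l\in n_0\bz$, a contradiction. Since $T(aV^l)=0$ for $n_0\nmid l$ as well, $\varphi$ and every $\varphi_\mu\circ T$ already agree on monomials of degree not a multiple of $n_0$.

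It remains to produce $\nu\in\cp(\bt)$ with $\varphi(aV^{kn_0})=\varphi_\nu(T(aV^{kn_0}))$ for all $a\in\ga$, $k\in\bz$. For each $k$ pick a unitary $w_{kn_0}\in L^\infty(X_0,\mu_0)$ with $\theta_g(w_{kn_0})u_g^{(kn_0)}=w_{kn_0}$ for all $g$ (it exists because $n_0\bz$ is exactly the set of levels carrying a measurable coboundary; one may take $w_{kn_0}=\prod_{j=0}^{k-1}\a^{jn_0}(w_{n_0})$, the $0$-cocycle implicit in the definition of $\widetilde{T}$). Writing $\widetilde{\pi}_\varphi$ for the normal extension of $\pi_\varphi$ from $\pi_{\om_0}(\ga)$ to $\pi_{\om_0}(\ga)''\cong L^\infty(X_0,\mu_0)$ (the Lemma just above) and using Remark~\ref{cov}, set $W_k:=\widetilde{\pi}_\varphi(w_{kn_0})\pi_\varphi(V^{kn_0})$. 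Remark~\ref{cov}, the relation $\Phi^u_g(V^{kn_0})=u_g^{(kn_0)}V^{kn_0}$ and the cocycle identity for $w_{kn_0}$ give $V_g^\varphi W_k(V_g^\varphi)^*=W_k$ for all $g$, and $W_kW_l=W_{k+l}$ since $\ga$ is commutative; thus $k\mapsto W_k=W_1^k$ is a unitary representation of $\bz$ commuting with every $V_g^\varphi$, so each $\zeta_k:=W_k\xi_\varphi$ lies in $\ch_\varphi^G$ and $\langle\xi_\varphi,W_1^k\xi_\varphi\rangle=\int_\bt z^k\,{\rm d}\nu(z)$ for $\nu\in\cp(\bt)$ the spectral measure of $W_1$ at $\xi_\varphi$. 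A short mean-ergodic computation then gives, for every $b\in L^\infty(X_0,\mu_0)$ and $\zeta\in\ch_\varphi^G$,
\[\langle\xi_\varphi,\widetilde{\pi}_\varphi(b)\zeta\rangle=\widetilde{\om}_0(b)\,\langle\xi_\varphi,\zeta\rangle,\]
because $\langle\xi_\varphi,\widetilde{\pi}_\varphi(b)\zeta\rangle=\langle\xi_\varphi,\widetilde{\pi}_\varphi(\widetilde{\theta}_{g^{-1}}(b))\zeta\rangle$ for all $g$ by Remark~\ref{cov} and $G$-invariance of $\xi_\varphi,\zeta$, and averaging over a F{\o}lner sequence, Proposition~\ref{vonNeumannG} with ergodicity of $\mu_0$ yields $\frac1{\mu_G(F_n)}\int_{F_n}\widetilde{\theta}_{g^{-1}}(b)\,{\rm d}\mu_G(g)\to\widetilde{\om}_0(b)1$ in $L^2(X_0,\mu_0)$, which passes inside by normality of $\widetilde{\pi}_\varphi$. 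Applying this with $\zeta=\zeta_k$, $b=aw_{kn_0}^*$ and using $\pi_\varphi(V^{kn_0})\xi_\varphi=\widetilde{\pi}_\varphi(w_{kn_0}^*)\zeta_k$, we get
\begin{align*}
\varphi(aV^{kn_0})&=\langle\xi_\varphi,\widetilde{\pi}_\varphi(aw_{kn_0}^*)\zeta_k\rangle=\widetilde{\om}_0(aw_{kn_0}^*)\langle\xi_\varphi,W_1^k\xi_\varphi\rangle\\
&=\widetilde{\om}_0(aw_{kn_0}^*)\int_\bt z^k\,{\rm d}\nu(z)=\varphi_\nu\big(T(aV^{kn_0})\big),
\end{align*}
the last equality by \eqref{Tonmon}. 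Together with the reduction step and the norm density of $\,{\rm span}\{aV^l\colon a\in\ga,\,l\in\bz\}$, this gives $\varphi=\varphi_\nu\circ T=\Psi(\nu)$, completing the proof.

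I expect the surjectivity half to carry all the difficulty, and within it the two genuinely delicate points are: transporting the cohomological equation into $L^2(X_0,\mu_0)$ in the reduction step, which hinges on identifying $\overline{\pi_\varphi(\ga)\xi_\varphi}$ with $\ch_{\om_0}$ via unique ergodicity of $(\ga,G,\theta)$; and the correct guess for $\nu$ — not the measure with moments $\varphi(V^{kn_0})$, but the spectral measure at $\xi_\varphi$ of the unitary $W_1=\widetilde{\pi}_\varphi(w_{n_0})\pi_\varphi(V^{n_0})$ living in the commutant of the $V_g^\varphi$'s — whose analysis forces one to bring in the normal extension $\widetilde{\pi}_\varphi$ and the displayed constancy identity.
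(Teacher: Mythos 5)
Your proof is correct and follows essentially the same route as the paper: the same reduction showing $\varphi(aV^l)=0$ for $l\notin n_0\bz$, and the same choice of measure, namely the spectral measure at $\xi_\varphi$ of the unitary $\widetilde{\pi}_\varphi(w_{n_0})\pi_\varphi(V^{n_0})$, analyzed via the normal extension $\widetilde{\pi}_\varphi$ and Remark~\ref{cov}. The only (harmless) deviation is that in the averaging step you invoke the mean ergodic theorem (Proposition~\ref{vonNeumannG}) plus normality of $\widetilde{\pi}_\varphi$, where the paper uses Lindenstrauss' pointwise ergodic theorem.
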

\begin{proof}
Under the assumption that the system is not uniquely ergodic, the smallest natural number, $n_0$, corresponding to which there is a non-trivial solution of the cohomological equation is well defined as is the map $T$ in Lemma
\ref{Tonmon}.\\
The map $\Psi$ is clearly affine. We next show it is a bijection. To this end, we first note that $\Psi$ is injective, as follows from the density of the range of $T$ in $C(\bt)$ established in Lemma \ref{RanT}. Proving its surjectivity is a far more demanding task.  Let  $\eta$ be a  $G$-invariant state on  $\ga\rtimes_\a\bz$. In order to exhibit  the state $\varphi_\mu$ on $C^*(V^{n_0})$ our state $\eta$ comes from, we need to consider a representation
$\rho_\eta$ of $C(\bt)$ acting on the Hilbert space $\ch_\eta$ which is defined as
$$\rho_\eta(z^l):= \widetilde{\pi}_\eta (w_{ln_0})\pi_\eta(V^{ln_0})\,, l\in\bz\,.$$
Let $\mu$ be the spectral measure of the above representation associated with $\xi_\eta$, namely the Borel probability
measure uniquely determined by $\int_\bt f{\rm d}\mu=\langle \rho_\eta(f)\xi_\eta, \xi_\eta \rangle$, for all $f$ in $C(\bt)$.
We claim that $\mu$ is precisely the sought measure, namely that
$\varphi_\mu\circ T=\eta$. To this aim, it is enough to make sure that
$\varphi_\mu(T(aV^k))=\eta(aV^k)$ holds for all $a$ in $\ga$ and $k$ in $\bz$. There are two cases.
Suppose $k$ is not a multiple of $n_0$. In this case, $\eta(aV^k)=0$, as we saw over the course of the proof
of Theorem \ref{uniquelyerg}, and $T(aV^k)=0$ too,  which we saw at the beginning of the proof of Lemma \ref{RanT}.\\
If now $k$ is a multiple of $n_0$, say $k=ln_0$,  we have 
$$\varphi_\mu(T(aV^{ln_0}))=\widetilde{\om}_0(aw_{ln_0}^*)\varphi_\mu(V^{ln_0})\, .$$
On the other hand, we can compute $\eta(aV^{ln_0})$ in the following way:
\begin{align*}
&\eta(aV^{ln_0})=\langle  \pi_\eta(aV^{ln_0})\xi_\eta, \xi_\eta \rangle= \langle\pi_\eta(a)\widetilde{\pi}_\eta(w_{ln_0}^*)\widetilde{\pi}_\eta(w_{ln_0})\pi_\eta(V^{ln_0})\xi_\eta, \xi_\eta \rangle\\
&=\frac{1}{\mu_G(F_n)}\int_{F_n}\langle V_g^\eta\pi_\eta(a)\widetilde{\pi}_\eta(w_{ln_0}^*)(V_g^\eta)^* V_g^\eta \widetilde{\pi}_\eta(w_{ln_0})\pi_\eta(V^{ln_0})\xi_\eta, \xi_\eta \rangle{\rm d}\mu_G(g)
\end{align*}
Now $\frac{1}{\mu_G(F_n)}\int_{F_n} V_g^\eta\pi_\eta(a)\widetilde{\pi}_\eta(w_{ln_0}^*)(V_g^\eta)^*{\rm d}\mu_G(g)$ 
converges as a sequence in $L^\infty(X_0, \mu_0)$  to $\widetilde{\om}_0(aw_{ln_0}^*)$ $\mu_0$ a.e. thanks to Lindenstrauss' pointwise ergodic theorem, \cite[Theorem 1.2]{Lind}.
We claim that $\widetilde{\pi}_\eta(w_{ln_0})\pi_\eta(V^{ln_0})\xi_\eta$  is $V_g^\eta$-invariant, which means $$V_g^\eta \widetilde{\pi}_\eta(w_{ln_0})\pi_\eta(V^{ln_0})\xi_\eta=\widetilde{\pi}_\eta(w_{ln_0})\pi_\eta(V^{ln_0})\xi_\eta\,, g\in G.$$
But then
\begin{align*}
&\lim_{n\rightarrow\infty}\frac{1}{\mu_G(F_n)}\int_{F_n} \langle V_g^\eta\pi_\eta(a)\widetilde{\pi}_\eta(w_{ln_0}^*)(V_g^\eta)^* V_g^\eta\widetilde{\pi}_\eta(w_{ln_0})\pi_\eta(V^{ln_0})\xi_\eta, \xi_\eta \rangle{\rm d}{\mu_G}(g)\\
&=\widetilde{\om}_0(aw_{ln_0}^*)\langle \widetilde{\pi}_\eta(w_{ln_0})\pi_\eta(V^{ln_0})\xi_\eta, \xi_\eta \rangle= \widetilde{\om}_0(aw_{ln_0}^*)\langle \rho_\eta(z^l)\xi_\eta, \xi_\eta \rangle \\
&= \widetilde{\om}_0(aw_{ln_0}^*)\varphi_\mu(V^{ln_0})\,.
\end{align*}
To conclude, we prove the claim. To this aim, we start from the equation satisfied by $w_{ln_0}$ written in the form
$V_g^{\om_0}w_{ln_0}(V_g^{\om_0})^*\pi_{\om_0}(u_g^{(ln_0)})=w_{ln_0}$. Taking $\widetilde{\pi}_\eta$ on both sides of the equation and taking into account Remark \ref{cov}, we find $V_g^\eta\widetilde{\pi}_\eta(w_{ln_0})(V_g^\eta)^* \pi_\varphi(u_g^{(ln_0)})=\widetilde{\pi}_\eta(w_{ln_0})$. But then we have
\begin{align*}
&V_g^\eta\widetilde{\pi}_\eta(w_{ln_0})\pi_\eta(V^{ln_0})\xi_\eta=V_g^\eta\widetilde{\pi}_\eta(w_{ln_0})(V_g^\eta)^*V_g^\eta\pi_\eta(V^{ln_0})\xi_\eta\\
&=\widetilde{\pi}_\eta(w_{ln_0})\pi_\varphi(u_{ln_0}^*)V_g^\eta\pi_\eta(V^{ln_0})(V_g^\eta)^*\xi_\eta\\
&=\widetilde{\pi}_\eta(w_{ln_0})\pi_\varphi((u_g^{(ln_0)})^*)\pi_\eta(u_g^{(ln_0)}V^{ln_0})\xi_\eta\\
&=\widetilde{\pi}_\eta(w_{ln_0})\pi_\eta(V^{ln_0})\xi_\eta\,,
\end{align*}
and the proof is now complete.
\end{proof}

We conclude the section with a result which, among other things, proves that Abadie and Dykema's problem
can be answered in the affirmative in the class of skew-products.

\begin{thm}\label{AbDyk}
Let $(\ga\rtimes_\a\bz, G,  \Phi^u)$ be askew-product system, with $(\ga, G, \theta)$ being a uniquely ergodic system with unique invariant state $\om_0$. The following conditions are equivalent:
\begin{itemize}
\item [(i)] $(\ga\rtimes_\a\bz, G, \Phi^u)$ is uniquely ergodic w.r.t. the fixed-point subalgebra  $(\ga\rtimes_\a\bz)^G$;
\item [(ii)] There exists a unique $G$-invariant conditional expectation from $\ga\rtimes_\a\bz$ onto
$(\ga\rtimes_\a\bz)^G$;
\item [(iii)] for every $n$ in $\bn$, if the cocycle $(u_g^{(n)})_{g\in G}$ is a coboundary, then it is a continuous coboundary;
\end{itemize}
\end{thm}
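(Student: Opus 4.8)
The plan is to handle the three conditions as follows. The equivalence (i)$\Leftrightarrow$(iii) requires no new argument: to say that no cocycle $(u_g^{(n)})_{g\in G}$ is a measurable non-continuous coboundary is exactly to say that each such cocycle, once it is an $L^\infty$-coboundary, is automatically a continuous one, so this is just a restatement of Theorem~\ref{uniquelyerg}. It therefore remains to prove (i)$\Leftrightarrow$(ii). For (i)$\Rightarrow$(ii), under (i) Proposition~\ref{ADgen} gives norm convergence of the Ces\`aro averages $M_n(x)$ for every $x$ in $\ga\rtimes_\a\bz$; the limit $E(x):=\lim_n M_n(x)$ is a unital completely positive norm-one projection of $\ga\rtimes_\a\bz$ onto $(\ga\rtimes_\a\bz)^G$, hence a conditional expectation by Tomiyama's theorem, and it is $G$-invariant because $M_n\circ\Phi^u_g$ and $M_n$ differ by a term that vanishes along the F{\o}lner sequence. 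For uniqueness I would argue in the standard way: any $G$-invariant conditional expectation $E'$ onto $(\ga\rtimes_\a\bz)^G$ satisfies $E'(M_n(x))=E'(x)$ for all $n$ by $G$-invariance of $E'$; letting $n\to\infty$ and using that $E'$ is norm continuous and restricts to the identity on $(\ga\rtimes_\a\bz)^G$ forces $E'=E$.

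The core of the theorem is (ii)$\Rightarrow$(i), which I would prove contrapositively. Assume $(\ga\rtimes_\a\bz,G,\Phi^u)$ is not uniquely ergodic w.r.t. the fixed-point subalgebra. By the equivalence (i)$\Leftrightarrow$(iii) just recorded, some $(u_g^{(n)})_{g\in G}$ is a measurable non-continuous coboundary, so in particular the system is not uniquely ergodic (Corollary~\ref{uniqueerg}) and Theorem~\ref{invstates} applies. Write $\bz_u=m_0\bz$ as in Proposition~\ref{Zsubgroup} and let $n_0>0$ be the least level at which a (possibly only measurable) coboundary occurs, so that $n_0\mid m_0$. If $m_0=0$ the system is weakly ergodic and $(\ga\rtimes_\a\bz)^G=\bc 1$ by Propositions~\ref{weakergo} and \ref{dicho}; a $G$-invariant conditional expectation onto $\bc 1$ is nothing but a $G$-invariant state, and Theorem~\ref{invstates} exhibits uncountably many, so (ii) fails. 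If $m_0>0$, the presence of a measurable non-continuous coboundary forces $p:=m_0/n_0\geq 2$, while $(\ga\rtimes_\a\bz)^G\cong C^*(w_{m_0}V^{m_0})\cong C(\bt)$ with $w_{m_0}$ the continuous solution at level $m_0$; choosing the solutions at the levels $kn_0$ compatibly, namely $w_{kn_0}:=w_{n_0}\a^{n_0}(w_{n_0})\cdots\a^{(k-1)n_0}(w_{n_0})$, one gets $w_{km_0}V^{km_0}=(w_{m_0}V^{m_0})^k$, whence $T((w_{m_0}V^{m_0})^k)=V^{km_0}$ by \eqref{Tonmon}.

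It then suffices, in the case $m_0>0$, to produce two distinct $G$-invariant conditional expectations onto $(\ga\rtimes_\a\bz)^G$. By Theorem~\ref{invstates} the invariant states are exactly the $\varphi_\mu\circ T$ with $\mu\in\cp(\bt)$, and $(\varphi_\mu\circ T)((w_{m_0}V^{m_0})^k)=\int_\bt z^{kp}\,{\rm d}\mu(z)$, so $\varphi_\mu\circ T$ restricts on $(\ga\rtimes_\a\bz)^G\cong C(\bt)$ to the state given by the push-forward of $\mu$ under $z\mapsto z^p$. To each continuous partition of unity $c\colon\bt\to[0,1]$ subordinate to the $\bz_p$-action $w\mapsto e^{2\pi i/p}w$ (i.e. $\sum_{j=0}^{p-1}c(e^{2\pi ij/p}w)=1$ for all $w$) I attach the weak-$*$ continuous affine map $\mu_c\colon\cp(\bt)\to\cp(\bt)$ determined by $\mu_c(\delta_\zeta):=\sum_{w^p=\zeta}c(w)\,\delta_w$---well defined because a cyclic relabelling of the fibre leaves the sum unchanged, continuous because the fibre moves continuously, and probability-valued by the partition-of-unity identity---and I set $E_c(x)$ to be the function $\zeta\mapsto(\varphi_{\mu_c(\delta_\zeta)}\circ T)(x)$ on $\bt\cong(\ga\rtimes_\a\bz)^G$. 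Each $E_c$ is linear, unital and positive, hence completely positive since its target is abelian, is $\Phi^u$-invariant because $T$ is, and fixes each $(w_{m_0}V^{m_0})^k$, since $E_c((w_{m_0}V^{m_0})^k)(\zeta)=\sum_{w^p=\zeta}c(w)w^{kp}=\zeta^k\sum_{w^p=\zeta}c(w)=\zeta^k$; so $E_c$ is a $G$-invariant conditional expectation onto $(\ga\rtimes_\a\bz)^G$. Taking $c\equiv 1/p$ recovers the canonical expectation built from $\mathcal E_{n_0}$, $\widetilde T$ and the identification $C^*(V^{m_0})\cong C^*(w_{m_0}V^{m_0})$, whereas $c(w)=\frac1p(1+{\rm Re}\,w)$---admissible precisely because $p\geq 2$---yields a different one, as one checks by evaluating both on a monomial $aV^{jn_0}$ with $j\equiv-1\pmod p$ and $a$ chosen, as in the proof of Lemma~\ref{RanT}, so that $\widetilde\om_0(aw_{jn_0}^*)\neq 0$. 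This contradicts (ii) and closes the contrapositive.

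The step I expect to be the main obstacle is this last construction: one must verify carefully that $\mu_c$ is a genuinely well-defined weak-$*$ continuous $\cp(\bt)$-valued affine map despite the cyclic ambiguity in enumerating the $p$-th roots of $\zeta$; that $E_c$ really takes values in, and restricts to the identity on, $(\ga\rtimes_\a\bz)^G$---which is exactly where the compatible normalisation $w_{km_0}V^{km_0}=(w_{m_0}V^{m_0})^k$ enters; and that two different partitions of unity yield genuinely different maps rather than the same one. By contrast, the degenerate case $m_0=0$ and the implication (i)$\Rightarrow$(ii) are routine.
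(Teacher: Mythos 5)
Your proposal is correct and follows essentially the same route as the paper: (i)$\Rightarrow$(ii) is the general fact, (iii)$\Leftrightarrow$(i) is Theorem \ref{uniquelyerg}, and the heart of the matter is refuting (ii) when (iii) fails by composing the map $T$ with distinct conditional expectations from $C(\bt)\cong C^*(V^{n_0})$ onto the subalgebra generated by $\chi_{p}$. The only difference is that you construct those expectations explicitly via partitions of unity subordinate to the $\bz_p$-rotation (and treat the case $m_0=0$ separately), whereas the paper delegates both points to \cite[Proposition 3.10]{DFR2}.
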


\begin{proof}
The implication (i)$\Rightarrow$(ii) holds in general. 
As for the implication (ii)$\Rightarrow$(iii), suppose that (iii) does not hold. Then the cohomological
equations also have non-continuous solutions. We denote by $n_0$ the smallest natural number
for which there is a (non-continuous) solution and by $m_0$ the smallest natural number for which the solution
is continuous. We have $m_0=kn_0$ for some (possibly infinite) $k>0$. Following \cite{DFR2}, we next show that under the current hypotheses we can exhibit many different $\Phi_{\th,u}$-invariant conditional expectations.
To this end, we continue to identify $C^*(V^{n_0})$ with $C(\bt)$.
With this identification, we may think of the map $T$ in Lemma \ref{RanT} as taking values in $C(\bt)$, \emph{i.e.}
$T(aV^{ln_0})=\widetilde{\om}_0(aw_{l_0}^*)z^l$. As recalled towards the end of page $13$ of
\cite{DFR2},  there are many different
conditional expectations from $C(\bt)$ onto its subalgebra $C^*(\chi_k)$ generated by the character $\chi_k(z)=z^k$, $z$ in $\bt$. Denote by $F$ any such conditional expectation.\\
Now $C^*(\chi_k)\cong C^*(w_{kn_0}V^{kn_0})=(\ga\rtimes_\a\bz)^ {\Phi_{\th,u}}$  through the $*$-isomorphism $\Psi\colon C^*(\chi_k)\rightarrow C^*(w_{kn_0}V^{kn_0})$ such that $\Psi(\chi_k):=w_{kn_0}V^{kn_0}$.\\
By construction, the composition $E_F=\Psi\circ F\circ T$ provides a $G$-invariant conditional  expectation
onto  $(\ga\rtimes_\a\bz)^G$ and we are done because this procedure yields many different conditional expectations
as explained at length in \cite[Proposition 3.10]{DFR2}.\\
\end{proof}

\appendix
\section{}
We collect here a bunch of possibly known results which are needed to give a self-contained proof of Lemma
\ref{autoconj}, which in turn is one of the key tools to prove Theorem \ref{classification}.

\begin{lem}\label{minimal}
Let $X$ be a compact Hausdorff space with infinitely many points. If $\a$ is a minimal homeomorphism of
$X$, then for any $n\neq 0$ one has  $(f\circ \a^n) g=fg$ for all $f$ in $C(X)$ implies $g=0$.
\end{lem}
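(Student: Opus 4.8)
The plan is to show that the identity $(f\circ\a^n)g=fg$ forces $g$ to vanish off the set of fixed points of $\a^n$, and then to observe that a minimal homeomorphism of an infinite compact Hausdorff space has no such fixed points at all.

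First I would dispose of the sign of $n$: since $\a^n(x)=x$ is equivalent to $\a^{-n}(x)=x$, it suffices to treat the case $n\geq 1$. Next, read the hypothesis pointwise: for every $x\in X$ and every $f\in C(X)$ we have $f(\a^n(x))\,g(x)=f(x)\,g(x)$. If $g(x)\neq 0$, then cancelling $g(x)$ yields $f(\a^n(x))=f(x)$ for all $f\in C(X)$; since $X$ is compact Hausdorff, $C(X)$ separates the points of $X$, and hence $\a^n(x)=x$. Therefore $g$ vanishes outside the closed set $P:=\{x\in X:\a^n(x)=x\}$.

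It then remains to prove that $P=\emptyset$. Suppose $x\in P$. Then the orbit $\{x,\a(x),\dots,\a^{n-1}(x)\}$ is a non-empty, finite (hence closed) and $\a$-invariant subset of $X$; by minimality of $\a$ it must coincide with $X$, contradicting the assumption that $X$ has infinitely many points. Thus $P=\emptyset$, and consequently $g\equiv 0$. (Equivalently, one may note that $P$ is closed and $\a$-invariant, so by minimality it is empty or all of $X$, the latter forcing $\a^n=\id$ and again $X$ finite by the same orbit argument.)

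The only place where the full strength of the hypotheses enters is this last step, where minimality together with the infinitude of $X$ rules out periodic points; everything preceding it is a formal manipulation of the defining identity combined with the separation of points by continuous functions on a compact Hausdorff space. Accordingly, I do not anticipate any genuine obstacle.
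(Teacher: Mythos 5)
Your proof is correct and takes essentially the same route as the paper's: at a point where $g$ does not vanish one cancels $g(x)$, uses that $C(X)$ separates points to force $\alpha^n(x)=x$, and then rules out periodic points via minimality together with the infinitude of $X$. The only difference is the order in which the two ingredients are deployed, which is cosmetic.
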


\begin{proof}
We shall argue by contradiction. Let $n$ be a fixed non-zero integer.
Suppose there exists $x_0$ in $X$ with $g(x_0)\neq 0$. Then simplifying
the given equality by $g(x_0)$, we find $f(\a^n(x_0))=f(x_0)$. Under our hypotheses, we can assume
$\a^n(x_0)\neq x_0$, for otherwise $\{\a^k(x_0)\colon k\in\bn\}$ would be a proper closed subset invariant under
$\a$. The contradiction is now reached by considering a continuous function $f$ that separates $x_0$ and
$\a^n(x_0)$.
\end{proof}

\begin{rem}\label{measurable}
As a straightforward application of the above result, we also find the following.
In the same hypotheses as  Lemma \ref{minimal}, let  $\mu_0$ be a faithful $\a$-invariant  probability
measure on $X$. If $g$ in $L^\infty(X,\mu_0)$ satisfies  $f(\a^n(x))g(x)=f(x)g(x)$ $\mu_0$ a.e. for all $f$ in $L^\infty(X,\mu_0)$ for some $n\neq 0$, then $g$ is zero $\mu_0$ a.e.
\end{rem}

\begin{lem}\label{MASA}
If $\ga=C(X)$ is a commutative $C^*$-algebra with infinite spectrum $X$ and  $\a$ is a minimal homeomorphism of
$X$, then $\ga$ is maximal abelian in $\ga\rtimes_\a\bz$.
\end{lem}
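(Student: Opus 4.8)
The plan is to take an arbitrary $x$ in $\ga\rtimes_\a\bz$ that commutes with every $f$ in $\ga=C(X)$ and show that it already lies in $\ga$, which is exactly the assertion that $\ga$ is maximal abelian. The natural device is the Fourier expansion recalled in Section \ref{prel}: putting $a_k:=E(V^{-k}x)\in\ga$ for each $k\in\bz$, one has $x=\sum_{k}V^k a_k$, the series converging in the Ces\`aro sense. Since every Ces\`aro partial sum is a $\ga$-linear combination of the $V^k$'s, it will be enough to prove that $a_k=0$ for all $k\neq 0$, for then each partial sum reduces to $a_0$ and hence $x=a_0\in\ga$.

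The key point is to extract the $k$-th Fourier coefficient from the identity $fx=xf$. First I would record the commutation rule $V^{-k}f=\a^{-k}(f)V^{-k}$, which is immediate from $VfV^*=\a(f)$. Because the canonical conditional expectation $E$ onto $\ga$ is an $\ga$-bimodule map (this follows at once from $E(y)=\int_\bt\a_z(y)\,{\rm d}m(z)$ and $\a_z|_\ga=\id$), we get
\[
E\big(V^{-k}fx\big)=\a^{-k}(f)\,E(V^{-k}x)=\a^{-k}(f)\,a_k,\qquad
E\big(V^{-k}xf\big)=E(V^{-k}x)\,f=a_k\,f .
\]
Hence $fx=xf$ forces $\a^{-k}(f)\,a_k=a_k\,f$ for every $f$ in $C(X)$ and every $k$, that is, $(f\circ\a^{-k})\,a_k=f\,a_k$ for all $f\in C(X)$. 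For $k\neq 0$ this is precisely the hypothesis of Lemma \ref{minimal} (with the nonzero integer $n=-k$, using that $X$ is infinite and $\a$ minimal), so $a_k=0$. Therefore $x=a_0\in\ga$, and the lemma follows.

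I do not expect a genuine obstacle here; the only point that deserves a line of care, rather than a difficulty, is the legitimacy of reading off Fourier coefficients despite the merely Ces\`aro convergence of the expansion — this is sidestepped entirely because the coefficient maps $x\mapsto E(V^{-k}x)$ are honest bounded $\ga$-bimodule maps, so one may apply them directly to both sides of $fx=xf$ without touching the series. Everything else is the bookkeeping of the commutation relation and a direct appeal to Lemma \ref{minimal}.
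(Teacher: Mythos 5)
Your proof is correct and follows essentially the same route as the paper: expand $x$ via its Fourier coefficients $E(V^{-k}x)$, use $xf=fx$ to derive $\a^{-k}(f)\,E(V^{-k}x)=E(V^{-k}x)\,f$, and kill all coefficients with $k\neq 0$ by Lemma \ref{minimal}. Your extraction of the coefficients by applying the bounded $\ga$-bimodule map $y\mapsto E(V^{-k}y)$ directly to $fx=xf$ is just a slightly more explicit justification of what the paper invokes as ``uniqueness of the Fourier expansion,'' so there is nothing substantively different.
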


\begin{proof}
We need to show that if $x$ in  $\ga\rtimes_\a\bz$ commutes with any $a$ in $\ga$, then $x$ itself sits in $\ga$.
Recall that for any such $x$ the Fourier expansion  $x=\sum_n V^n E(V^{-n}x)$ holds w.r.t. the norm topology if the convergence is understood in the Cesaro sense.
In terms of the above expansion, the equality $xa=ax$ reads $\sum_n V^n E(V^{-n}x)a=\sum_n a V^n E(V^{-n}x)$, that is
$$\sum_n V^n E(V^{-n}x)a=\sum_n  V^n \a^{-n}(a) E(V^{-n}x)\, .$$
By uniqueness of the Fourier expansion, we must have $\a^{-n}(a) E(V^{-n}x)=E(V^{-n}x)a$ for all $a$ in $\ga$, which implies
$E(V^{-n}x)=0$ for all $n\neq 0$ thanks to Lemma \ref{minimal}. But then $x=E(x)$ and we are done.
\end{proof}

\begin{lem}
Let $X$ be an infinite compact Hausdorff space and $\a$ a homeomorphism of $X$. 
If $(C(X), \a)$ is a a uniquely ergodic system whose unique invariant state $\om_0$ is faithful
then $\pi_{\om_0}(\ga)''$ is maximal abelian in $\pi_\om(\ga\rtimes_\a\bz)''$.
\end{lem}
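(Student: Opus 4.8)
The plan is to combine three ingredients: the topological fact that $\a$ is minimal, the measure-theoretic rigidity recorded in Remark \ref{measurable}, and a Fourier-expansion argument carried out at the von Neumann algebra level. Throughout, $\om:=\om_0\circ E$ is the canonical state on $\ga\rtimes_\a\bz$, $M:=\pi_\om(\ga\rtimes_\a\bz)''$, and I use the orthogonal decomposition $\ch_\om=\bigoplus_{n\in\bz}\ch_n$ with $\ch_0=[\pi_\om(\ga)\xi_\om]$ and $\ch_n=\pi_\om(V)^n\ch_0$ (which holds because $\om=\om_0\circ E$, the spaces $\ch_n$ being the spectral subspaces of the gauge action).

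First I would check that $\a$ is minimal. The support of $\mu_0$ is a nonempty closed $\a$-invariant set, hence contains a minimal closed invariant subset $Y$ (Zorn). By the Markov--Kakutani theorem $\a|_Y$ admits an invariant probability measure, which, regarded on $X$, must equal $\mu_0$ by unique ergodicity; thus $\mu_0(Y)=1$, so $X=\supp\mu_0\subseteq Y\subseteq X$ (the first equality being faithfulness of $\om_0$), i.e. $X=Y$ is minimal and $\a$ has no periodic points. In particular Remark \ref{measurable} applies to $(X,\a,\mu_0)$. Next, $\pi_\om(\ga)''$ is abelian (weak closure of an abelian $*$-algebra), and since $P_0\in\pi_\om(\ga)'$ the subspace $\ch_0$ is $\pi_\om(\ga)''$-invariant; restriction to $\ch_0$ together with $\xi_\om$ identifies $\pi_\om(\ga)''$ with $(\pi_{\om_0}(\ga)'',\xi_{\om_0})\cong(L^\infty(X,\mu_0),1)$, the restriction being injective because $\xi_{\om_0}$ is separating for $\pi_{\om_0}(\ga)''$ ($\om_0$ faithful).

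The core is a von Neumann crossed-product computation. As $\om$ is gauge-invariant, the gauge action lifts to a strongly continuous unitary representation $z\mapsto U_z$ on $\ch_\om$ fixing $\xi_\om$, and $E_\om:=\int_\bt \ad(U_z)\,{\rm d}m(z)$ is a normal conditional expectation of $M$ onto $M^\bt$. Since $E_\om(\pi_\om(y))=\pi_\om(E(y))$ for $y$ in $\ga\rtimes_\a\bz$, a Kaplansky-density argument using normality of $E_\om$ gives $M^\bt=\pi_\om(\ga)''$. Now let $T\in M$ commute with $\pi_\om(\ga)''$ and put $E_n:=E_\om(\pi_\om(V)^{-n}T)\in\pi_\om(\ga)''$. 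Using that $E_\om$ is $\pi_\om(\ga)$-bimodular, that $T$ commutes with $\pi_\om(\ga)$, and that $V^{-n}aV^n=\a^{-n}(a)$, one obtains $E_n\pi_\om(a)=\pi_\om(\a^{-n}(a))E_n$ for all $a$ in $\ga$; since $E_n$ and $\pi_\om(a)$ lie in the abelian algebra $\pi_\om(\ga)''$ this says $\pi_\om(a-\a^{-n}(a))E_n=0$ for all $a$, whence $E_n=0$ for every $n\neq 0$ by Remark \ref{measurable} transported through the identification of the previous paragraph. Finally, set $S:=T-E_\om(T)$; then $E_\om(\pi_\om(V)^{-n}S)=0$ for all $n$, and reading this off the block decomposition $\ch_\om=\bigoplus_n\ch_n$ — where $E_\om(\pi_\om(V)^{-n}S)=\sum_jP_j\pi_\om(V)^{-n}SP_j$ vanishes forces $P_jSP_{j+n}=0$ for every $j$, as $\pi_\om(V)^{-n}$ is a unitary from $\ch_{j+n}$ onto $\ch_j$ — yields $S=0$. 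Hence $T=E_\om(T)\in\pi_\om(\ga)''$, and since $\pi_\om(\ga)''$ is abelian it is maximal abelian in $M$.

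The main obstacle is not conceptual but technical: it is the passage from the $C^*$-level uniqueness of the Fourier expansion to its von Neumann-level analogue, namely the identification $M^\bt=\pi_\om(\ga)''$ and the implication ``$E_\om(\pi_\om(V)^{-n}S)=0$ for all $n\ \Rightarrow\ S=0$''; both are handled through Kaplansky density/normality of $E_\om$ and the explicit block structure of $\ch_\om$, so they require care rather than a new idea. The ergodic input (minimality, hence the applicability of Remark \ref{measurable}) and the coboundary-type computation producing $E_n=0$ are short by comparison.
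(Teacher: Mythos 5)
Your proof is correct and follows essentially the same route as the paper's: the paper simply declares the result an ``easy adaptation'' of Lemma \ref{MASA}, invoking the von Neumann--level Fourier expansion from \cite[Proposition 4.2]{DFR1} together with Remark \ref{measurable}, and your argument is precisely that adaptation written out in full (construction of the normal expectation $E_\om$, vanishing of the Fourier coefficients $E_n$ for $n\neq 0$ via Remark \ref{measurable}, and the block-decomposition argument giving uniqueness of the expansion). The only genuine addition is your derivation of minimality of $\a$ from unique ergodicity plus faithfulness of $\om_0$, a hypothesis check the paper leaves implicit.
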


\begin{proof}
It is an easy adaptation of the proof of Lemma \ref{MASA} by taking into account
that the Fourier expansion holds at the von Neumann algebra level as well, namely for any
$X$ in $\pi_\om(\ga\rtimes_\a\bz)''$ one has $X=\sum_n \pi_\om(V^n) \widetilde{E}(V^{-n}x)$, see 
\cite  [Proposition 4.2]{DFR1}, and exploiting Remark \ref{measurable}.
\end{proof}

\begin{lem}\label{autoconj}
If $\Psi$ is an automorphism of $\ga\rtimes_\a\bz$ (or $\pi_\om(\ga\rtimes_\a\bz)''$) with
 $\Psi(a)=a$  (or $\Psi(\pi_\om(a))=\pi_\om(a)$) for all $a$ in $\ga$, then there exists a unitary
$w$ in $\ga$ (or $\pi_\om(\ga)''$) such that $\Psi(V)=wV$ (or $\Psi(\pi_\om(V))=w\pi_\om(V)$).
\end{lem}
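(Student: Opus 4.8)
The plan is to recognise the unitary $w$ as $\Psi(V)V^{*}$ (respectively $\Psi(\pi_\om(V))\pi_\om(V)^{*}$ in the von Neumann-algebra case) and to show that this element lies in $\ga$ (respectively $\pi_\om(\ga)''$) because it commutes with the maximal abelian subalgebra $\ga$.

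For the $C^{*}$-algebraic statement I would argue as follows. Since $\Psi$ is a $*$-automorphism, $\Psi(V)$ is a unitary of $\ga\rtimes_\a\bz$. For every $b$ in $\ga$ one has $VbV^{*}=\a(b)\in\ga$, so, using that $\Psi$ is the identity on $\ga$,
\[
\Psi(V)\,b\,\Psi(V)^{*}=\Psi(VbV^{*})=\Psi(\a(b))=\a(b)\,,\qquad b\in\ga\,.
\]
Combining this with $V^{*}aV=\a^{-1}(a)\in\ga$ yields, for every $a$ in $\ga$,
\[
\bigl(\Psi(V)V^{*}\bigr)\,a\,\bigl(\Psi(V)V^{*}\bigr)^{*}=\Psi(V)\,V^{*}aV\,\Psi(V)^{*}=\Psi(V)\,\a^{-1}(a)\,\Psi(V)^{*}=\a\bigl(\a^{-1}(a)\bigr)=a\,,
\]
so that $w:=\Psi(V)V^{*}$ belongs to the relative commutant of $\ga$ in $\ga\rtimes_\a\bz$. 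At this point I would invoke Lemma \ref{MASA}: $\ga$ is maximal abelian in $\ga\rtimes_\a\bz$, whence $w\in\ga$; being a product of two unitaries $w$ is unitary, and $wV=\Psi(V)V^{*}V=\Psi(V)$, which is the desired conclusion.

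The von Neumann-algebra case is the same argument, with $V$, $a$, $\ga$ replaced by $\pi_\om(V)$, $\pi_\om(a)$, $\pi_\om(\ga)$ and the relation $\pi_\om(V)\pi_\om(a)\pi_\om(V)^{*}=\pi_\om(\a(a))$ in place of $VaV^{*}=\a(a)$; the only additional remark is that the element $w:=\Psi(\pi_\om(V))\pi_\om(V)^{*}$, once it is known to commute with $\pi_\om(\ga)$, automatically commutes with the von Neumann algebra $\pi_\om(\ga)''$ it generates, so that $w\in(\pi_\om(\ga)'')'\cap\pi_\om(\ga\rtimes_\a\bz)''=\pi_\om(\ga)''$ by the maximal abelianness of $\pi_{\om_0}(\ga)''\cong L^{\infty}(X_0,\mu_0)$ proved in the lemma just above (which applies since $\om_0$ is faithful). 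Then $w$ is a unitary of $\pi_\om(\ga)''$ and $\Psi(\pi_\om(V))=w\,\pi_\om(V)$.

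I do not anticipate a genuine obstacle here: the statement is essentially a corollary of the two maximal-abelianness results already at our disposal, the only computation involved being the conjugation identity displayed above. The single point deserving attention is to make sure that it is $\Psi(V)V^{*}$ — rather than $\Psi(V)$ itself — that centralises $\ga$, which is exactly the content of the second display, together with the standing hypotheses on $(\ga,G,\theta)$ that place us within the scope of Lemma \ref{MASA} and of its von Neumann-algebra counterpart.
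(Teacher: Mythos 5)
Your proof is correct and follows essentially the same route as the paper's: the same conjugation identity $\Psi(V)a\Psi(V)^*=\a(a)$ combined with $VaV^*=\a(a)$ to show that the "quotient" unitary centralises $\ga$, followed by an appeal to the maximal-abelianness Lemma \ref{MASA} (and its $W^*$ analogue). The only cosmetic difference is that you work with $\Psi(V)V^*$ where the paper uses $V^*\Psi(V)$, which lets you read off $\Psi(V)=wV$ directly; your slightly more explicit treatment of the von Neumann case is a welcome addition to the paper's "handled in the same way."
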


\begin{proof}
We only deal with the $C^*$-case, for the $W^*$-case can be handled in the same way.
From the equality $VaV^*=\a(a)$, $a\in\ga$, we find $\Psi(V)a\Psi(V^*)=\a(a)$, which means
$VaV^*=\Psi(V)a\Psi(V^*)$, $V^*\Psi(V)a=aV^*\Psi(V)$ for all $a$ in $\ga$. 
The conclusion follows by Lemma \ref{MASA}, which implies that $V^*\Psi(V)$ is a unitary
$w$ sitting in $\ga$.
\end{proof}

\section*{Acknowledgements}

We would like to thank Dan Ursu for allowing us to include in the present paper his solution, Example \ref{Ursu},  to the problem posed by Abadie and Dykema,  which he let us have in a private communication. \\
The authors acknowledge INdAM-GNAMPA Project 2024 ``Probabilita' quantistica e applicazioni''  \\
V.C., S.D.V. and S.R. are also supported by Italian PNRR Partenariato Esteso PE4, NQSTI, and Centro Nazionale CN00000013 CUP H93C22000450007.
M.E.G. is supported by Italian PNRR MUR project PE0000023-NQSTI, CUP H93C22000670006, and  Progetto ERC SEEDS UNIBA ``$C^*$-algebras and von Neumann algebras in Quantum Probability'', CUP H93C23000710001.

\end{document}